\documentclass{amsart}
\usepackage{amsmath,amssymb}
\usepackage[dvips]{graphics}
\usepackage[all]{xy}
%%%%%%%%%%
\newtheorem{Theorem}{Theorem}[section]
\newtheorem{Lemma}[Theorem]{Lemma}
\newtheorem{Proposition}[Theorem]{Proposition}
\newtheorem{Corollary}[Theorem]{Corollary}

\newtheorem{Question}[Theorem]{Question}

\newtheorem{Remark}[Theorem]{Remark}

% itemize

% numbering of figures

\makeatletter
\@addtoreset{figure}{section}%{subsection}
\def\@thmcountersep{-}
\makeatother

% proof

\numberwithin{equation}{section}

%    Absolute value notation

%    Blank box placeholder for figures (to avoid requiring any
%    particular graphics capabilities for printing this document).

\begin{document} 

\title{Symmetries of spatial graphs and Simon invariants}

%    Information for first author
\author{Ryo Nikkuni}
\address{Department of Mathematics, School of Arts and Sciences, Tokyo Woman's Christian University, 2-6-1 Zempukuji, Suginami-ku, Tokyo 167-8585, Japan}
\email{nick@lab.twcu.ac.jp}
%    \thanks will become a 1st page footnote.
\thanks{The first author was partially supported by Grant-in-Aid for Young Scientists (B) (No. 18740030), Japan Society for the Promotion of Science.}

%    Information for second author
\author{Kouki Taniyama}
%    Address of record for the research reported here
\address{Department of Mathematics, School of Education, Waseda University, Nishi-Waseda 1-6-1, Shinjuku-ku, Tokyo, 169-8050, Japan}
%    Current address
%\curraddr{}
\email{taniyama@waseda.jp}
%    \thanks will become a 1st page footnote.
\thanks{The second author was partially supported by Grant-in-Aid for Scientific Research (C) (No. 18540101), Japan Society for the Promotion of Science.}

%    General info
\subjclass{Primary 57M15; Secondary 57M25}

\date{}

\dedicatory{}

\keywords{Symmetric spatial graph, achiral link, Simon invariant, linking number}

\begin{abstract}
An ordered and oriented $2$-component link $L$ in the $3$-sphere is said to be achiral if it is ambient isotopic to its mirror image ignoring the orientation and ordering of the components. Kirk-Livingston showed that if $L$ is achiral then the linking number of $L$ is not congruent to $2$ modulo $4$. In this paper we study orientation-preserving or reversing symmetries of $2$-component links, spatial complete graphs on $5$ vertices and spatial complete bipartite graphs on $3+3$ vertices in detail, and determine the necessary conditions on linking numbers and Simon invariants for such links and spatial graphs to be symmetric.
\end{abstract}

\maketitle

\section{Introduction} 

Throughout this paper we work in the piecewise linear category. Let $L=J_{1}\cup J_{2}$ be an ordered and oriented $2$-component link in the unit $3$-sphere ${\mathbb S}^{3}$. Unless otherwise stated, the links in this paper will be ordered and oriented. A link $L$ is said to be {\it component preserving achiral} (CPA) if there exists an orientation-reversing self-homeomorphism $\varphi$ of ${\mathbb S}^{3}$ such that $\varphi(J_{1})=J_{1}$ and $\varphi(J_{2})=J_{2}$, and {\it component switching achiral} (CSA) if there exists an orientation-reversing self-homeomorphism $\varphi$ of ${\mathbb S}^{3}$ such that $\varphi(J_{1})=J_{2}$ and $\varphi(J_{2})=J_{1}$ \cite{kidwell06}. If $L$ is either CPA or CSA, then $L$ is said to be {\it achiral}. Note that $L$ may be both CPA and CSA (a trivial link, for example). The following was shown by Kirk-Livingston. 

\begin{Theorem}\label{mod4} 
{\rm (\cite[\sc 6.1 Corollary]{kirk-livingston97})} 
If $L$ is achiral then ${\rm lk}(L)$ is not congruent to $2$ modulo $4$, where ${\rm lk}$ denotes the linking number. 
\end{Theorem}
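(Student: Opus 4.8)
The plan is to lift an achirality symmetry to the double branched cover and read off a congruence obstruction from its torsion linking form. First I would reduce to an unoriented statement: the cover $\Sigma_{2}(L)$ depends only on the unoriented, unordered link underlying $L$ (the surjection $\pi_{1}(\mathbb{S}^{3}\setminus L)\to\mathbb{Z}/2\mathbb{Z}$ sending every meridian to the generator is canonical, hence preserved by any self-homeomorphism of the pair), and the assertion ``${\rm lk}(L)\not\equiv 2\pmod{4}$'' is insensitive to reversing a component because $-2\equiv 2\pmod{4}$. So it suffices to prove: if $(\mathbb{S}^{3},L)$ admits an orientation-reversing self-homeomorphism $\varphi$, then ${\rm lk}(L)\not\equiv 2\pmod{4}$. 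Since $\varphi$ preserves the surjection above, it lifts to the double branched cover $M:=\Sigma_{2}(L)$, and the lift $\widetilde{\varphi}$ is again orientation-reversing; hence the torsion linking form $\lambda$ of $M$ is isometric to $-\lambda$.

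Now suppose for contradiction that ${\rm lk}(L)\equiv 2\pmod{4}$, and examine the $2$-primary part of $H_{1}(M;\mathbb{Z})$. The Conway polynomial of a $2$-component link has the form $\nabla_{L}(z)={\rm lk}(L)\,z+a_{3}z^{3}+a_{5}z^{5}+\cdots$ with $a_{i}\in\mathbb{Z}$, and $\det(L)=\lvert\nabla_{L}(2\sqrt{-1})\rvert$; hence $\det(L)\equiv 2\,{\rm lk}(L)\equiv 4\pmod{8}$, so in particular $\det(L)\neq 0$, $M$ is a rational homology $3$-sphere, and $\lvert H_{1}(M;\mathbb{Z})\rvert=\det(L)=4m$ with $m$ odd. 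Next choose a connected Seifert surface $F$ for $L$ with Seifert matrix $V$, so that $H_{1}(M;\mathbb{Z})\cong{\rm coker}(V+V^{T})$ and $\lambda$ is presented by the symmetric matrix $V+V^{T}$. Reducing modulo $2$, $V+V^{T}\equiv V-V^{T}$, which represents the mod-$2$ intersection form of $F$; its radical is spanned by the class of a boundary curve of $F$, hence is at most $1$-dimensional, and it is nonzero since an odd-dimensional $\mathbb{F}_{2}$-vector space admits no nondegenerate alternating form. Thus $\dim_{\mathbb{F}_{2}}H_{1}(M;\mathbb{F}_{2})=1$, so the $2$-primary part of $H_{1}(M;\mathbb{Z})$ is cyclic, namely $\mathbb{Z}/4\mathbb{Z}$.

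To conclude, restrict $\lambda$ to this $2$-primary part; being a characteristic subgroup it is respected by $\widetilde{\varphi}_{*}$, and there $\lambda$ is the form $\langle u/4\rangle$ on $\mathbb{Z}/4\mathbb{Z}$ for some odd $u$. Then $\lambda\cong-\lambda$ would force $\langle u/4\rangle\cong\langle-u/4\rangle$, i.e. $-1\equiv s^{2}\pmod{4}$ for some unit $s$ — impossible. Hence ${\rm lk}(L)\not\equiv 2\pmod{4}$.

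The real work is the second paragraph: establishing $\det(L)\equiv 4\pmod{8}$ when ${\rm lk}(L)\equiv 2\pmod{4}$, and pinning down that the $2$-primary part of $H_{1}(\Sigma_{2}(L);\mathbb{Z})$ is exactly $\mathbb{Z}/4\mathbb{Z}$; granting these, the lifting of $\varphi$, the sign change of the linking form under orientation reversal, and the nonexistence of a square root of $-1$ modulo $4$ are all formal. An alternative packaging of the same ingredients — closer to the cited source, which studies $\Sigma_{2}(L)$ via the Casson--Walker invariant — would combine this determinant congruence with the constraints that an orientation-reversing self-homeomorphism imposes simultaneously on the Casson--Walker invariant and on the linking form of $\Sigma_{2}(L)$.
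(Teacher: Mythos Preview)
The paper itself does not prove this theorem; it is quoted as a result of Kirk--Livingston, with a pointer to Livingston's later elementary argument, so there is no in-paper proof to compare against directly.

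Your argument is correct. The determinant computation $\det(L)=\lvert\nabla_{L}(2i)\rvert=2\lvert{\rm lk}(L)-4a_{3}+16a_{5}-\cdots\rvert\equiv 4\pmod 8$ is right; the identification of the $2$-primary part of $H_{1}(\Sigma_{2}(L);\mathbb{Z})$ as $\mathbb{Z}/4\mathbb{Z}$ via the mod-$2$ rank of $V+V^{T}$ is clean (the point that $H_{1}(F;\mathbb{F}_{2})$ is odd-dimensional for a connected Seifert surface of a $2$-component link, so the alternating form $V-V^{T}$ must have exactly one-dimensional radical, is the key step and is correctly argued); and the endgame, that the nondegenerate form $\langle u/4\rangle$ on $\mathbb{Z}/4\mathbb{Z}$ with $u$ odd cannot be isometric to its negative because $-1$ is not a square modulo $4$, is standard. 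Compared with the cited sources your route differs from both: Kirk--Livingston extract the obstruction from the Casson--Walker invariant of $\Sigma_{2}(L)$, as you note in your closing remark, whereas you use only the torsion linking form. What your approach buys is transparency---it isolates precisely the anisotropic piece of $\Sigma_{2}(L)$ that obstructs the symmetry, and it needs nothing beyond the classical Seifert presentation of the branched cover and its linking form.
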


See also \cite[Theorem 5.1]{livingston03} for an elementary proof of Theorem \ref{mod4}.  
Note that for any odd integer $n$ there exists a $2$-component link of linking number $n$ which is both CPA and CSA; see Fig. \ref{CPACSA} (cf. \cite[\S 5]{livingston03}). 
In \cite{livingston03}, Livingston gave an example of a CSA link with linking number $4$ (a cabling of the Hopf link) and stated open problems: to find an achiral link of linking number $4m$ for any integer $m$, and to find a CPA link of linking number $4m$ for any integer $m$''. For the latter, Kidwell showed the following.

\begin{figure}[htbp]
      \begin{center}
\scalebox{0.325}{\includegraphics*{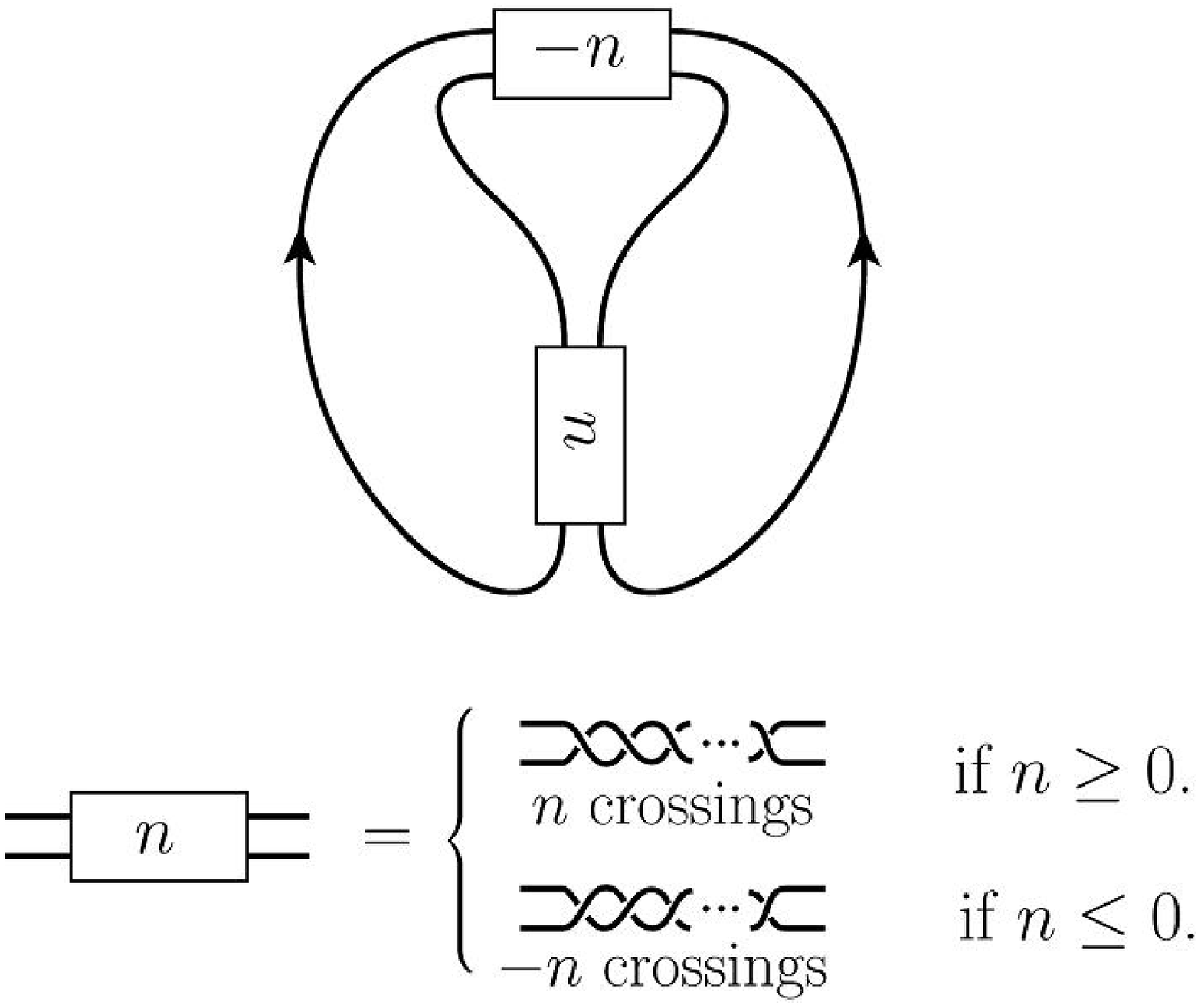}}
      \end{center}
   \caption{}
  \label{CPACSA}
\end{figure} 

\begin{Theorem}\label{CPA_oddlk} 
{\rm (\cite[Theorem 4]{kidwell06})} 
A $2$-component link of nonzero even linking number cannot be CPA. 
\end{Theorem}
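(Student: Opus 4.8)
\emph{Set-up and orientation bookkeeping.} The plan is to reduce the claim to Kirk--Livingston's Theorem~\ref{mod4} by passing to a cyclic branched cover, after first pinning down how an orientation-reversing symmetry must act on the two components. So suppose $L=J_{1}\cup J_{2}$ is CPA via an orientation-reversing $\varphi$ with $\varphi(J_{i})=J_{i}$ as sets, and set $n={\rm lk}(L)$; assume $n$ is even and $n\neq 0$. Since $\deg\varphi=-1$ we have ${\rm lk}(\varphi(J_{1}),\varphi(J_{2}))=-n$, while $\varphi_{*}[J_{i}]=\varepsilon_{i}[J_{i}]$ with $\varepsilon_{i}\in\{\pm1\}$ recording whether $\varphi$ preserves or reverses the orientation of $J_{i}$. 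Hence $\varepsilon_{1}\varepsilon_{2}n=-n$, and as $n\neq0$ exactly one of $\varepsilon_{1},\varepsilon_{2}$ equals $-1$; relabelling, $\varphi$ preserves the orientation of $J_{1}$ and reverses that of $J_{2}$. (Tracking meridians and $0$-framed longitudes one also gets $\varphi_{*}=\mathrm{diag}(-1,1)$ on $H_{1}(\partial N(J_{1}))$ and $\mathrm{diag}(1,-1)$ on $H_{1}(\partial N(J_{2}))$, which is occasionally convenient.)

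\emph{Lifting to the double branched cover.} Let $p\colon N\to{\mathbb S}^{3}$ be the double cover of ${\mathbb S}^{3}$ branched over $J_{1}$; it is an oriented rational homology sphere with covering involution $\sigma$ (orientation-preserving, with fixed set the branch locus $\widehat{J}_{1}$), and $\sigma_{*}=-{\rm id}$ on $H_{1}(N)$. Because $n$ is even, $J_{2}$ lifts to a $2$-component link $\widehat{J}_{2}=\widehat{J}'_{2}\cup\widehat{J}''_{2}$ with $\sigma(\widehat{J}'_{2})=\widehat{J}''_{2}$. Since $\varphi_{*}$ is $\pm1$ on $H_{1}({\mathbb S}^{3}\setminus J_{1})=\mathbb{Z}$ it preserves the index-$2$ subgroup defining $N$, so $\varphi$ lifts to a homeomorphism $\widetilde{\varphi}$ of $N$; after composing with $\sigma$ we may assume $\widetilde{\varphi}$ fixes each of $\widehat{J}'_{2},\widehat{J}''_{2}$. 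Then $\widetilde{\varphi}$ is orientation-reversing on $N$, commutes with $\sigma$ (hence preserves $\widehat{J}_{1}$), and --- covering $\varphi$ --- preserves the orientation of $\widehat{J}_{1}$ while reversing the orientation of each $\widehat{J}^{(i)}_{2}$.

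\emph{A linking constraint and a descent.} Comparing ${\rm lk}_{N}(\widetilde{\varphi}\widehat{J}'_{2},\widetilde{\varphi}\widehat{J}''_{2})=-{\rm lk}_{N}(\widehat{J}'_{2},\widehat{J}''_{2})$ (as $\widetilde{\varphi}$ reverses orientation) with $\widetilde{\varphi}\widehat{J}^{(i)}_{2}=-\widehat{J}^{(i)}_{2}$ forces ${\rm lk}_{N}(\widehat{J}'_{2},\widehat{J}''_{2})=0$. Moreover $\widehat{J}_{1}\cup\widehat{J}'_{2}$ is again a CPA pair in $N$: $\widetilde{\varphi}$ is orientation-reversing, preserves $\widehat{J}_{1}$ with its orientation and preserves $\widehat{J}'_{2}$ reversing its orientation; and from $p_{*}(\mu_{\widehat{J}_{1}})=2\mu_{J_{1}}$ and $p_{*}[\widehat{J}'_{2}]=[J_{2}]=n\mu_{J_{1}}$ one finds $[\widehat{J}'_{2}]=\tfrac{n}{2}\mu_{\widehat{J}_{1}}$ modulo torsion, so the new link has linking number $n/2$ (exactly, when $N={\mathbb S}^{3}$). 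When $J_{1}$ is unknotted, $N={\mathbb S}^{3}$ and $\widehat{J}_{1}$ is again unknotted, so iterating --- always branching over the (unknotted) first component --- stays among CPA links in ${\mathbb S}^{3}$ and strictly lowers the $2$-adic valuation of the linking number. Since $n\neq0$, after finitely many (possibly zero) steps we obtain a CPA, hence achiral, link whose linking number is twice an odd integer, i.e.\ $\equiv2\pmod4$, contradicting Theorem~\ref{mod4}.

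\emph{The main obstacle.} What still has to be handled is the case in which $J_{1}$ is knotted (necessarily an amphichiral knot, since $\varphi$ preserves it): then $N$ is a nontrivial rational homology sphere and the iteration leaves the world of links in ${\mathbb S}^{3}$ where Theorem~\ref{mod4} is available. I expect this to be the technical heart of the proof. One wants either a Kirk--Livingston-type obstruction valid in a rational homology sphere that itself admits an orientation-reversing self-homeomorphism (which $N$ does, because $J_{1}$ is $\varphi$-invariant), or a direct argument that the vanishing of the $\mathbb{Q}$-valued self-linking ${\rm lk}_{N}(\widehat{J}'_{2},\widehat{J}''_{2})$ forced above is already incompatible with $n\neq0$; for the latter one would analyse the linking pairing of $N$ on the class $[\widehat{J}'_{2}]\in H_{1}(N)$ (note that, using $\sigma_{*}=-{\rm id}$, the CPA condition makes the self-linking form vanish there) and relate it back to $n$. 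By comparison the orientation bookkeeping and the lifting of $\varphi$ in the first two steps are routine.
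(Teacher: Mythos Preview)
The paper does not give its own proof of Theorem~\ref{CPA_oddlk}; it is simply quoted from Kidwell~\cite{kidwell06}. So there is no in-paper argument to compare your proposal against.

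As a proof, your proposal is incomplete, and you say so yourself. The whole strategy rests on branching over $J_{1}$ and iterating inside ${\mathbb S}^{3}$, which requires $J_{1}$ to be unknotted at every stage. You explicitly leave open the case where $J_{1}$ is knotted and call it ``the technical heart of the proof.'' But nothing in the CPA hypothesis forces either component to be unknotted; both could perfectly well be nontrivial amphichiral knots. So this is not a residual technicality but the main case, and the two escape routes you sketch (a Kirk--Livingston obstruction in rational homology spheres admitting an orientation-reversing self-map, or a linking-form argument on $H_{1}(N)$) are research programmes rather than arguments --- neither is evidently easier than the theorem itself.

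The parts you do carry out are fine: the orientation bookkeeping forcing $\varepsilon_{1}\varepsilon_{2}=-1$ is correct, the lift of $\varphi$ to the double branched cover exists for the reason you give, and the vanishing of ${\rm lk}_{N}(\widehat{J}'_{2},\widehat{J}''_{2})$ and the halving of the linking number in the unknotted case are right. But until the knotted-$J_{1}$ case is handled, this is a partial argument, not a proof.
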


Kidwell also gave an example of a CSA link of linking number $4m$ for any odd integer $m$ \cite[\S 3]{kidwell06}. But as far as the authors know, a CSA link of linking number $4m$ for any nonzero even integer $m$ has not been exhibited yet. 

To give a complete answer to Livingston's problem, we will present a new family of achiral links. For an integer $m$ and $\varepsilon_{1},\varepsilon_{2}\in \left\{-1,1\right\}$, let $L(m,\varepsilon_{1},\varepsilon_{2})$ be a $2$-component link of linking number $4m+(\varepsilon_{1}+\varepsilon_{2})/2$ as illustrated in Fig. \ref{achiral_link}. Note that $L(0,\varepsilon,-\varepsilon)$ is trivial for $\varepsilon=\pm 1$. Therefore $L(0,\varepsilon,-\varepsilon)$ is both a CPA and CSA link of linking number $0$. Moreover we have the following. 

\begin{figure}[htbp]
      \begin{center}
\scalebox{0.375}{\includegraphics*{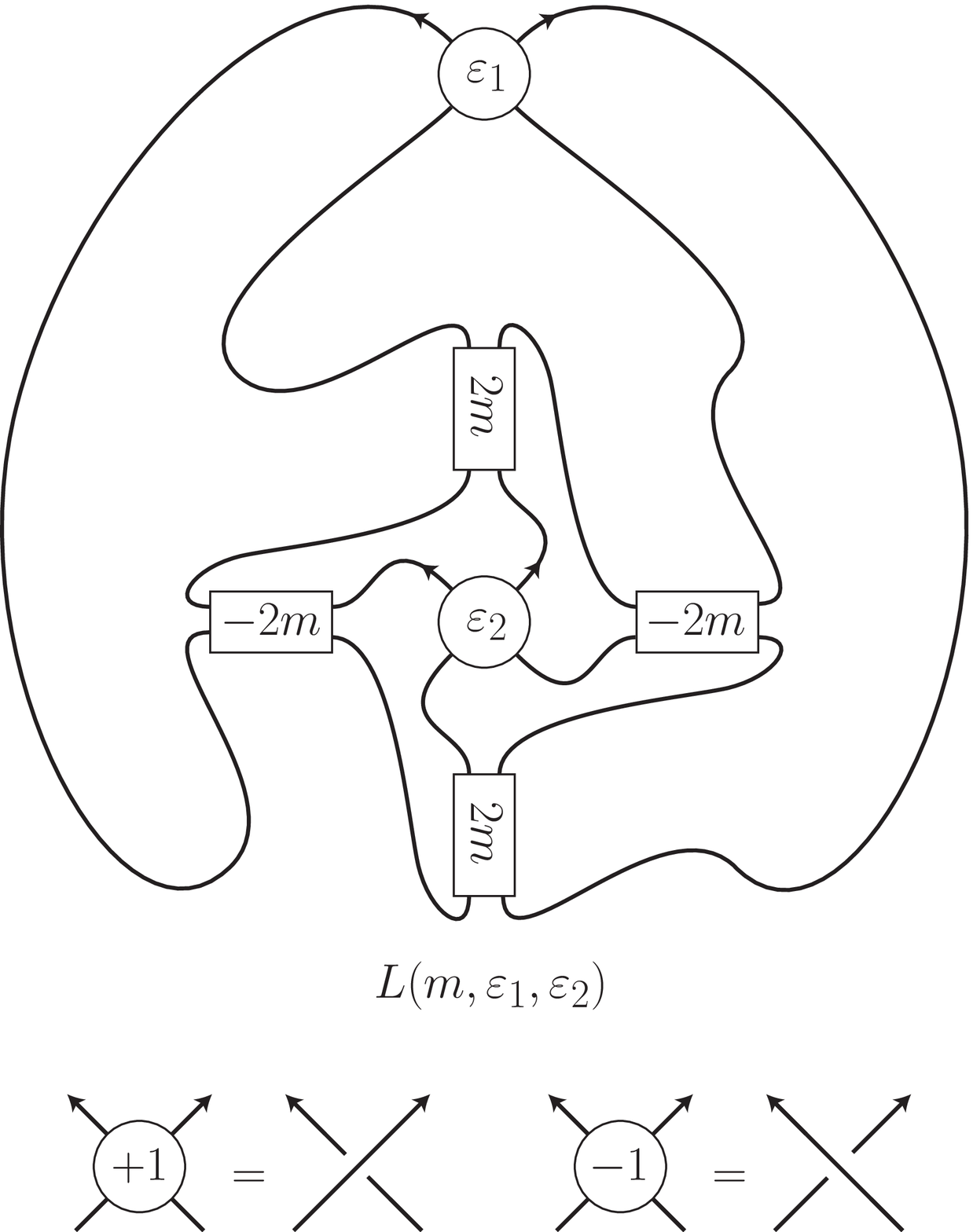}}
      \end{center}
   \caption{}
  \label{achiral_link}
\end{figure} 

\begin{Theorem}\label{newexample}
{\rm (1)} For any integer $m$ and $\varepsilon_{1},\varepsilon_{2}\in \left\{-1,1\right\}$, $L(m,\varepsilon_{1},\varepsilon_{2})$ is CSA. 

\noindent
{\rm (2)} For any integer $m$ and $\varepsilon=\pm 1$, $L(m,\varepsilon,\varepsilon)$ is CPA. 
\end{Theorem}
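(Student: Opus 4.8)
The plan is to prove both parts by exhibiting the required self-homeomorphisms of $\mathbb{S}^{3}$ directly, after putting the diagram of Fig.~\ref{achiral_link} into a sufficiently symmetric position; no invariants are needed, only a careful reading of the construction.

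For (1), I would first ambient-isotope $L(m,\varepsilon_{1},\varepsilon_{2})$ into a position in which it is visibly interchanged by an inversion (reflection) $\sigma$ in a round $2$-sphere $\Sigma\subset\mathbb{S}^{3}$: split $\mathbb{S}^{3}=B_{1}\cup_{\Sigma}B_{2}$ with $J_{i}\subset\mathrm{int}\,B_{i}$, and arrange that $J_{2}=\sigma(J_{1})$. Note that the two components must be separated by $\Sigma$ and linked "through" it; a naive reflection in a plane meeting the link cannot produce nonzero linking number, so the $2$-sphere picture is the correct one. The real content here is combinatorial: one must read off from the figure that the ``$4m$''-part of the diagram is assembled from a tangle inside $B_{1}$ together with its mirror inside $B_{2}$, and that the $\varepsilon_{1}$- and $\varepsilon_{2}$-decorations are likewise exchanged by $\sigma$. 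Granting this, $\sigma$ is an orientation-reversing self-homeomorphism of $\mathbb{S}^{3}$ with $\sigma(J_{1})=J_{2}$ and $\sigma(J_{2})=J_{1}$, so $L(m,\varepsilon_{1},\varepsilon_{2})$ is CSA. As a consistency check, the linking number $4m+(\varepsilon_{1}+\varepsilon_{2})/2$ is never congruent to $2$ modulo $4$, in agreement with Theorem~\ref{mod4}.

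For (2), suppose $\varepsilon_{1}=\varepsilon_{2}=\varepsilon$. Then the two components play symmetric roles in the diagram, so besides $\sigma$ there is an \emph{orientation-preserving} self-homeomorphism $\tau$ of $\mathbb{S}^{3}$ — a $\pi$-rotation of the symmetric diagram about a suitable axis — with $\tau(J_{1})=J_{2}$ and $\tau(J_{2})=J_{1}$. The composite $\tau^{-1}\circ\sigma$ is then orientation-reversing and satisfies $\tau^{-1}\sigma(J_{1})=J_{1}$, $\tau^{-1}\sigma(J_{2})=J_{2}$, which is exactly the definition of CPA; hence $L(m,\varepsilon,\varepsilon)$ is CPA. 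This also explains the hypothesis $\varepsilon_{1}=\varepsilon_{2}$: when $\varepsilon_{1}\neq\varepsilon_{2}$ and $m\neq 0$ the link has nonzero even linking number $4m$ and cannot be CPA by Theorem~\ref{CPA_oddlk}, while $L(0,\varepsilon,-\varepsilon)$ is trivial and hence trivially CPA.

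The main obstacle is entirely the bookkeeping in the first step: one must genuinely verify that the diagram of Fig.~\ref{achiral_link} can be isotoped into the mirror-symmetric form described above, with the $4m$ twisting preserved by $\sigma$ and the two decorations $\varepsilon_{1},\varepsilon_{2}$ exchanged, and — for (2) — that $\varepsilon_{1}=\varepsilon_{2}$ really furnishes the extra interchange symmetry $\tau$. Once the figure has been redrawn in this way, the homeomorphisms $\sigma$ and $\tau^{-1}\circ\sigma$ are immediate and no further computation is required.
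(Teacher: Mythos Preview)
Your proposal contains a genuine error in part (1) that makes the whole argument collapse. You propose to isotope $L(m,\varepsilon_{1},\varepsilon_{2})$ so that a round $2$-sphere $\Sigma$ separates the components, with $J_{i}\subset\mathrm{int}\,B_{i}$ and $J_{2}=\sigma(J_{1})$ for the reflection $\sigma$ in $\Sigma$. But any $2$-sphere in $\mathbb{S}^{3}$ bounds two $3$-balls, and a link whose components lie in distinct complementary balls is \emph{split}; in particular its linking number is $0$. Since ${\rm lk}\bigl(L(m,\varepsilon_{1},\varepsilon_{2})\bigr)=4m+(\varepsilon_{1}+\varepsilon_{2})/2$ is nonzero for all but the trivial cases, no such $\Sigma$ can exist. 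Your sentence ``the two components must be separated by $\Sigma$ and linked `through' it'' is precisely where the argument goes wrong: a $2$-sphere in $\mathbb{S}^{3}$ has no hole for the components to link through. The consistency check against Theorem~\ref{mod4} does not rescue this, because that theorem gives only a necessary condition.

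The paper's proof uses a different symmetry altogether. The diagram of $L(m,\varepsilon_{1},\varepsilon_{2})$ is placed on the standard $\mathbb{S}^{2}\subset\mathbb{S}^{3}$ with the $\varepsilon_{1}$-crossing at the north pole and the $\varepsilon_{2}$-crossing at the south pole; both components wind around the polar axis and are \emph{not} separated by any sphere. A $\pi/2$ rotation about that axis carries the diagram to its mirror diagram while interchanging the components, which yields CSA. For (2) your idea of composing with an extra component-switching symmetry is correct in spirit, and indeed the paper does exactly this: when $\varepsilon_{1}=\varepsilon_{2}$ a $\pi$ rotation about an equatorial axis swaps $J_{1}$ and $J_{2}$, and composing with the $\pi/2$ polar rotation gives the orientation-reversing, component-preserving map required for CPA. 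So the skeleton of your argument for (2) is right, but it rests on the CSA map from (1), and that map cannot be the separating-sphere reflection you describe.
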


\begin{proof} In Fig. 1.2, we may suppose that a regular diagram of $L(m,\varepsilon_1,\varepsilon_2)$ on the standard 2-sphere ${\mathbb S}^{2}$ in ${\mathbb S}^{3}$ is such that the crossing of sign $\varepsilon_1$ is on the north pole and that of sign $\varepsilon_2$ is on the south pole. 

\noindent
(1) A $\pi/2$ rotation of ${\mathbb S}^{2}$ around the earth's axis maps $L(m,\varepsilon_1,\varepsilon_2)$ onto its mirror image. Thus we have the assertion. 

\noindent
(2) The $\pi$ rotation of ${\mathbb S}^{2}$ around an axis through the equator interchanges the components of $L(m,\varepsilon,\varepsilon)$. Then by composing it with a $\pi/2$ rotation of ${\mathbb S}^{2}$ around the earth's axis, we see that $L(m,\varepsilon,\varepsilon)$ is mapped onto its mirror image preserving the components. Thus we have the result. 
\end{proof}

The following corollary shows that Theorems \ref{mod4} and \ref{CPA_oddlk} give the best possible necessary conditions on the linking number for a $2$-component link to be CPA or to be CSA. 

\begin{Corollary}\label{newexample_cor}
{\rm (1)} For any integer $m$ and $\varepsilon=\pm 1$, $L(m,\varepsilon,\varepsilon)$ is both a CPA and CSA link of linking number $4m+\varepsilon$. 

\noindent
{\rm (2)} For any integer $m$ and $\varepsilon=\pm 1$, 
$L(m,\varepsilon,-\varepsilon)$ is a CSA link of linking number $4m$. 
\end{Corollary}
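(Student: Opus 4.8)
The plan is to deduce the Corollary immediately from Theorem \ref{newexample} together with the linking number that was built into the very definition of the links $L(m,\varepsilon_1,\varepsilon_2)$; recall that $L(m,\varepsilon_1,\varepsilon_2)$ was introduced as a $2$-component link of linking number $4m+(\varepsilon_1+\varepsilon_2)/2$. So the whole argument is a substitution of two choices of $(\varepsilon_1,\varepsilon_2)$ into statements we already have.

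For part (1) I would specialize to $\varepsilon_1=\varepsilon_2=\varepsilon$, so that ${\rm lk}(L(m,\varepsilon,\varepsilon))=4m+(\varepsilon+\varepsilon)/2=4m+\varepsilon$. Theorem \ref{newexample}(1) says this link is CSA and Theorem \ref{newexample}(2) says it is CPA, so it is both, which is exactly the claim. For part (2) I would instead take $\varepsilon_1=\varepsilon$ and $\varepsilon_2=-\varepsilon$, so that ${\rm lk}(L(m,\varepsilon,-\varepsilon))=4m+(\varepsilon-\varepsilon)/2=4m$, and invoke Theorem \ref{newexample}(1) to conclude that this link is CSA.

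Since the argument is just a substitution into two already-proved statements, there is no real obstacle here; the only point that requires any inspection is the claim that the diagram in Fig. \ref{achiral_link} genuinely has linking number $4m+(\varepsilon_1+\varepsilon_2)/2$, i.e. that the $m$ plain full twists contribute $4m$ and the two distinguished crossings of signs $\varepsilon_1,\varepsilon_2$ contribute $(\varepsilon_1+\varepsilon_2)/2$ to the sum of the signs of the inter-component crossings. This is a direct count from the figure and is implicit in the way $L(m,\varepsilon_1,\varepsilon_2)$ was defined, so it need not be reproved.

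Finally, I would note how the Corollary yields the asserted sharpness of Theorems \ref{mod4} and \ref{CPA_oddlk}: part (1) produces, for every odd integer $n$, a link of linking number $n$ that is simultaneously CPA and CSA; part (2) produces, for every integer divisible by $4$, a CSA link of that linking number; and linking number $0$ is realized by a trivial link (equivalently by $L(0,\varepsilon,-\varepsilon)$). Combined with the facts that an achiral link cannot have linking number $\equiv 2 \pmod 4$ and a CPA link cannot have nonzero even linking number, these examples leave no room for a stronger necessary condition of this type.
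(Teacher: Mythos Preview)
Your proposal is correct and matches the paper's treatment exactly: the Corollary is stated without proof in the paper because it is an immediate substitution of $(\varepsilon_{1},\varepsilon_{2})=(\varepsilon,\varepsilon)$ and $(\varepsilon,-\varepsilon)$ into Theorem~\ref{newexample} together with the linking-number formula $4m+(\varepsilon_{1}+\varepsilon_{2})/2$ built into the definition of $L(m,\varepsilon_{1},\varepsilon_{2})$. Your closing remark on sharpness also reproduces precisely the sentence the paper places just before the Corollary.
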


\begin{Remark}\label{sym_remark}
{\rm A further step in the investigation of the achirality of $2$-component links is to take the invertibility of each component into account, as follows. For a $2$-component link $L=J_{1}\cup J_{2}$, assume that there exists a self-homeomorphism $\varphi$ of ${\mathbb S}^{3}$ such that $\varphi(L)=L$. Then the candidates of $(\varphi(J_{1}),\varphi(J_{2}))$ are (1) $(J_{1},J_{2})$, (2) $(J_{2},J_{1})$, (3) $(-J_{1},-J_{2})$, (4) $(-J_{2},-J_{1})$, (5) $(-J_{1},J_{2})$, (6) $(J_{1},-J_{2})$, (7) $(-J_{2},J_{1})$, (8) $(J_{2},-J_{1})$. When ${\rm lk}(L)\neq 0$, by observing the change of the sign of ${\rm lk}(L)$ it can be seen that the first four cases may occur only if $\varphi$ is orientation-preserving, and the last four only if $\varphi$ is orientation-reversing, where $L$ is CPA in case of (5) or (6), and CSA in case of (7) or (8). The first four cases can be realized by a $(2,2n)$-torus link simultaneously for any integer $n$, so these symmetries do not depend on the linking number. On the other hand, since $L(m,\varepsilon_{1},\varepsilon_{2})$ is invertible, only Theorems \ref{mod4} and \ref{CPA_oddlk} are restrictions of the linking number for the latter four cases.
}
\end{Remark}

Next, let us consider finite graphs which are embedded in ${\mathbb S}^{3}$. An embedding $f$ of a finite graph $G$ into ${\mathbb S}^{3}$ is called a {\it spatial embedding of $G$} or simply a {\it spatial graph}. Two spatial embeddings $f$ and $g$ of $G$ are said to be {\it ambient isotopic} if there exists an orientation-preserving self-homeomorphism $\varphi$ of ${\mathbb S}^{3}$ such that $\varphi\circ f=g$. In \cite{taniyama94b}, the second author introduced the notion of {\it (spatial graph-)homology} as a fundamental equivalence relation on spatial graphs which is weaker than ambient isotopy. It is known that a homology is generated by {\it Delta moves} and ambient isotopies \cite[Theorem 1.3]{motohashi-taniyama97}, where a Delta move is a local deformation of a spatial graph as illustrated in Fig. \ref{delta}. Linking numbers of constituent $2$-component links are typical homological invariants of spatial graphs. In particular, two $k$-component links are homologous if and only if they have the same pairwise linking numbers \cite[Theorem 1.1]{murakami-nakanishi89}. 

\begin{figure}[htbp]
      \begin{center}
\scalebox{0.45}{\includegraphics*{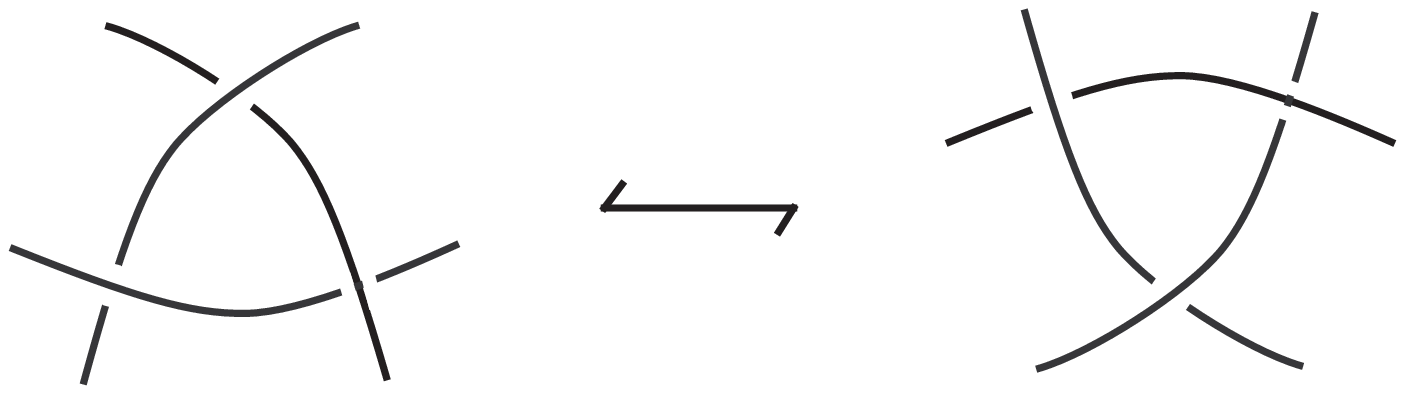}}
      \end{center}
   \caption{}
  \label{delta}
\end{figure} 

On the other hand, let $K_{5}$ and $K_{3,3}$ be a {\it complete graph} on five vertices and a {\it complete bipartite graph} on $3+3$ vertices respectively, known as obstructions in Kuratowski's graph planarity criterion \cite{kuratowski30}. For a spatial embedding $f$ of $K_{5}$ or $K_{3,3}$, the {\it Simon invariant} ${\mathcal L}(f)$ is defined \cite[$\S$4]{taniyama94b}, which is an odd integer valued homological invariant calculated from the regular diagram of $f$, like the linking number. We give a precise definition of ${\mathcal L}(f)$ in the next section. It is known that two spatial embeddings of a graph are homologous if and only if they have the same {\it Wu invariant} \cite{taniyama95}, or equivalently, their corresponding constituent $2$-component links have the same linking number and their corresponding spatial subgraphs which are homeomorphic to $K_{5}$ or $K_{3,3}$ have the same Simon invariant \cite{shinjo-taniyama03}. Thus, linking numbers and Simon invariants play a primary role in classifying spatial graphs up to homology. We remark here that both the linking number and the Simon invariant come from the Wu invariant as special cases, see \cite[$\S$2]{taniyama95} for details. 

Our main purpose in this paper is to reveal the relationship between orientation-preserving or reversing symmetries of spatial embeddings of $K_{5}$ and $K_{3,3}$ and their Simon invariants in addition to the case of $2$-component links and their linking numbers. A study of symmetries of spatial graphs is not only a fundamental problem in low dimensional topology as a generalization of achirality of knots and links but also an important research theme from a standpoint of application to macromolecular chemistry, which is called molecular topology \cite{walba83}, \cite{simon87}, \cite{flapan89}, \cite{flapan00}. Actually, both $K_{5}$ and $K_{3,3}$ have been realized chemically as underlying molecule graphs for some chemical compounds \cite{simmons-maggio81}, \cite{paquette-vazeux81}, \cite{wrh82}, \cite{kuck-schuster88}. We refer the reader to  \cite{simon86} for a pioneer work on symmetries of spatial embeddings of $K_{5}$ and $K_{3,3}$.

From now on we assume that $G$ is $K_{5}$ or $K_{3,3}$. Let ${\rm Aut}(G)$ be the {\it automorphism group} of $G$. We regard each automorphism of $G$ as a self-homeomorphism of $G$. For a spatial embedding $f$ of $G$ and an automorphism $\sigma$ on $G$, we say that $f$ is {\it $\sigma$-symmetric} (resp. {\it rigidly $\sigma$-symmetric}) if there exists a self-homeomorphism (resp. periodic self-homeomorphism) $\varphi$ of ${\mathbb S}^{3}$ such that $f\circ \sigma=\varphi\circ f$. In particular, if $\varphi$ is orientation-reversing then we say that $f$ is {\it $\sigma$-achiral} (resp. {\it rigidly $\sigma$-achiral}). In the first author's preliminary report \cite{nick07}, it was shown that for any odd integer $n$ there exist an automorphism $\sigma$ on $G$ and a $\sigma$-achiral spatial embedding $f$ of $G$ such that ${\mathcal L}(f)=n$. Then it is natural to ask the following question. 

\begin{Question}\label{question} 
For an automorphism $\sigma$ on $G$ and an odd integer $n$, does there exist a $\sigma$-symmetric spatial embedding $f$ of $G$ such that ${\mathcal L}(f)=n$?
\end{Question}

For an automorphism $\sigma$ of $G$ and an odd integer $n$, we say that the pair $(\sigma,n)$ is {\it realizable} if there exists a $\sigma$-symmetric spatial embedding $f$ of $G$ such that ${\mathcal L}(f)=n$. Then the following proposition holds. 

\begin{Proposition}\label{conjugate1}
Let $\sigma$ and $\tau$ be two automorphisms of $G$ which are conjugate in ${\rm Aut}(G)$, and $n$ an odd integer. Then $(\sigma,n)$ is realizable if and only if $(\tau,n)$ is realizable. 
\end{Proposition}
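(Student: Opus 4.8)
The plan is to reduce Proposition \ref{conjugate1} to two elementary properties of the Simon invariant — both direct analogues of familiar properties of the linking number — followed by a formal manipulation with the conjugating automorphism.

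First I would record the two ingredients. Ingredient (i): for every $\rho\in{\rm Aut}(G)$ and every spatial embedding $f$ of $G$ one has ${\mathcal L}(f\circ\rho)=\pm{\mathcal L}(f)$. This will follow straight from the definition of ${\mathcal L}$ to be given in Section 2: precomposing with $\rho$ only relabels the vertices, hence the edges, of $G$, and the defining signed sum of crossing contributions is unchanged by such a relabeling up to an overall sign. Ingredient (ii): if $\psi$ is an orientation-reversing self-homeomorphism of ${\mathbb S}^{3}$ and $\bar f:=\psi\circ f$ is the resulting mirror image, then ${\mathcal L}(\bar f)=-{\mathcal L}(f)$ (every crossing sign in a diagram is reversed), and if moreover $f$ is $\sigma$-symmetric via $\varphi$, i.e. $f\circ\sigma=\varphi\circ f$, then $\bar f\circ\sigma=\psi\circ f\circ\sigma=(\psi\varphi\psi^{-1})\circ\bar f$, so $\bar f$ is again $\sigma$-symmetric. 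Combining (i) and (ii) yields the useful observation that $(\sigma,n)$ is realizable if and only if $(\sigma,-n)$ is.

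Next I would carry out the conjugation argument. Assume $(\sigma,n)$ is realizable, witnessed by a spatial embedding $f$ with ${\mathcal L}(f)=n$ and a self-homeomorphism $\varphi$ of ${\mathbb S}^{3}$ with $f\circ\sigma=\varphi\circ f$. Fix $\rho\in{\rm Aut}(G)$ with $\tau=\rho\sigma\rho^{-1}$ and set $g:=f\circ\rho^{-1}$, a spatial embedding of $G$. Then
\[
g\circ\tau=f\circ\rho^{-1}\circ\rho\sigma\rho^{-1}=(f\circ\sigma)\circ\rho^{-1}=\varphi\circ f\circ\rho^{-1}=\varphi\circ g,
\]
so $g$ is $\tau$-symmetric via the same $\varphi$, while ${\mathcal L}(g)=\varepsilon n$ for some $\varepsilon\in\{-1,1\}$ by (i). If $\varepsilon=1$ then $g$ realizes $(\tau,n)$; if $\varepsilon=-1$, then by (ii) the mirror image $\bar g$ is $\tau$-symmetric with ${\mathcal L}(\bar g)=n$, so $\bar g$ realizes $(\tau,n)$. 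Either way $(\tau,n)$ is realizable, and since conjugacy is symmetric the same argument with $\sigma$ and $\tau$ interchanged gives the converse.

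The only point that is not pure formalism is ingredient (i), the transformation law ${\mathcal L}(f\circ\rho)=\pm{\mathcal L}(f)$; establishing it requires unwinding the explicit formula for the Simon invariant, and this is precisely where the combinatorics of ${\rm Aut}(K_{5})=S_{5}$ (respectively ${\rm Aut}(K_{3,3})$) enters. Once (i) is in hand the proposition is immediate, and in fact the same template shows that analogous symmetry properties of $2$-component links are invariant under conjugation of the relevant symmetry.
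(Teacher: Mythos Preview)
Your argument is correct and follows essentially the same route as the paper: both isolate the two lemmas ${\mathcal L}(f\circ\rho)=\pm{\mathcal L}(f)$ and ``$(\sigma,n)$ realizable $\Rightarrow$ $(\sigma,-n)$ realizable'', then run the formal conjugation step to conclude. The only noteworthy difference is in how ingredient (i) is justified: you propose checking ${\mathcal L}(f\circ\rho)=\pm{\mathcal L}(f)$ by unwinding the combinatorial crossing formula under relabeling, which in principle requires tracking how each $\rho$ permutes the signs $\varepsilon(x,y)$; the paper instead proves it uniformly (Lemma~\ref{auto_simon}) via the cohomological description of the Simon invariant, observing that $\rho\times\rho$ induces an automorphism of $H^{2}(C_{2}(G),\iota)\cong{\mathbb Z}$ and hence acts by $\pm1$. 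The cohomological argument is cleaner and avoids any case analysis over ${\rm Aut}(G)$, but your direct approach would also go through.
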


Therefore it is sufficient to consider only conjugacy classes in ${\rm Aut}(G)$. Note that we may identify ${\rm Aut}(G)$ with a subgroup of the symmetric group ${\mathfrak S}_{m}$ of degree $m$, where $m$ is the number of vertices of $G$. If $G$ is $K_{3,3}$, we assume that the vertices corresponding to $1$, $2$ and $3$ are adjacent to the vertices corresponding to $4$, $5$ and $6$. Then we have the following.

\begin{Proposition}\label{conjugate2}
{\rm (1)} Representatives for all conjugacy classes in ${\rm Aut}(K_{5})$ are 
\begin{eqnarray*}
{\rm id},\ (1\ 2),\ (1\ 2\ 3),\ (1\ 2\ 3\ 4),\ (1\ 2\ 3\ 4\ 5),\ (1\ 2)(3\ 4),\   (1\ 2)(3\ 4\ 5).
\end{eqnarray*}

\noindent
{\rm (2)} Representatives for all conjugacy classes in ${\rm Aut}(K_{3,3})$ are 
\begin{eqnarray*}
&&{\rm id},\ (1\ 2),\ (1\ 2\ 3),\ (1\ 4\ 2\ 5\ 3\ 6),\ (1\ 2)(4\ 5),\ (1\ 2)(4\ 5\ 6),\\
&& (1\ 2\ 3)(4\ 5\ 6),\ (1\ 4\ 2\ 5)(3\ 6),\ (1\ 4)(2\ 5)(3\ 6). 
\end{eqnarray*}
\end{Proposition}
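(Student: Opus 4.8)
The plan is to compute both automorphism groups and enumerate their conjugacy classes by elementary group theory. For $K_{5}$ the automorphism group is the full symmetric group ${\mathfrak S}_{5}$ on the vertex set, and two elements of a symmetric group are conjugate if and only if they have the same cycle type. The seven partitions $1+1+1+1+1$, $2+1+1+1$, $2+2+1$, $3+1+1$, $3+2$, $4+1$, $5$ of $5$ thus index the seven conjugacy classes, and one only has to check that the seven displayed permutations have, respectively, these cycle types; this gives (1).

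For $K_{3,3}$ I would first identify ${\rm Aut}(K_{3,3})$. Since $K_{3,3}$ is connected and bipartite, its bipartition $\{1,2,3\}\sqcup\{4,5,6\}$ is intrinsic --- two vertices lie in the same part exactly when they are joined by a path of even length --- so every automorphism either preserves both parts or interchanges them. The automorphisms preserving each part form the subgroup ${\mathfrak S}_{3}\times{\mathfrak S}_{3}$ (one may permute $\{1,2,3\}$ and $\{4,5,6\}$ independently), and $(1\ 4)(2\ 5)(3\ 6)$ interchanges them; hence ${\rm Aut}(K_{3,3})$ is the wreath product ${\mathfrak S}_{3}\wr{\mathfrak S}_{2}$, of order $72$, realized inside ${\mathfrak S}_{6}$ as above.

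Next I would run through the conjugacy classes of ${\mathfrak S}_{3}\wr{\mathfrak S}_{2}$ in the two standard families. For a part-preserving element $(a,b)\in{\mathfrak S}_{3}\times{\mathfrak S}_{3}$, conjugating by the base group realizes conjugacy within each ${\mathfrak S}_{3}$-factor, while conjugating by the part-swap $\tau=(1\ 4)(2\ 5)(3\ 6)$ exchanges the two factors; therefore $(a,b)$ and $(a',b')$ are conjugate in ${\rm Aut}(K_{3,3})$ precisely when the cycle types of $a,b$ coincide, as an unordered pair, with those of $a',b'$. Since ${\mathfrak S}_{3}$ has three conjugacy classes (identity, transposition, $3$-cycle), there are six such unordered pairs, with representatives ${\rm id}$, $(1\ 2)$, $(1\ 2\ 3)$, $(1\ 2)(4\ 5)$, $(1\ 2)(4\ 5\ 6)$, $(1\ 2\ 3)(4\ 5\ 6)$. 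For a part-exchanging element $g$, write $g=(a,b)\tau$; then $g^{2}=(ab,ba)$, conjugation within the base group brings $g$ to the form $(c,e)\tau$ for a suitable $c$ conjugate to $ab$, and the conjugacy class of $ab$ in ${\mathfrak S}_{3}$ is manifestly a conjugation invariant of $g$ (it can be read off from $g^{2}$). Hence the part-exchanging elements split into three conjugacy classes, one for each conjugacy class of ${\mathfrak S}_{3}$; taking $ab$ to be the identity, a transposition and a $3$-cycle gives the representatives $(1\ 4)(2\ 5)(3\ 6)$, $(1\ 4\ 2\ 5)(3\ 6)$, $(1\ 4\ 2\ 5\ 3\ 6)$, which one confirms by squaring. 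Altogether $6+3=9$ classes, proving (2).

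The only point requiring a little care is the conjugacy analysis in the part-exchanging coset of ${\mathfrak S}_{3}\wr{\mathfrak S}_{2}$: one must verify that the class of $ab$ is independent of the chosen decomposition $g=(a,b)\tau$ and that distinct classes of ${\mathfrak S}_{3}$ really do give non-conjugate elements --- both follow by tracking $g^{2}=(ab,ba)$ under conjugation. The remaining work --- listing the partitions of $5$ and checking the displayed cycle decompositions --- is routine bookkeeping.
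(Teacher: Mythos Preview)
Your proposal is correct and follows essentially the same route as the paper: both parts identify ${\rm Aut}(K_{5})\cong{\mathfrak S}_{5}$ (conjugacy $=$ cycle type) and ${\rm Aut}(K_{3,3})\cong{\mathfrak S}_{3}\wr{\mathfrak S}_{2}$, then enumerate classes according to the bipartition-preserving versus bipartition-exchanging dichotomy. The paper simply lists the resulting cycle shapes and writes ``we omit the details,'' whereas you supply the details --- in particular the invariant (conjugacy class of $ab$ for $g=(a,b)\tau$, readable from $g^{2}$) that separates the three part-exchanging classes --- so your argument is a fleshed-out version of theirs rather than a different one.
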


Now we state our main theorems. 

\begin{Theorem}\label{K5}
Let $G$ be $K_{5}$ and $n$ an odd integer. Then we have the following. 

\noindent
{\rm (1)} $((1\ 2\ 3),n)$ is realizable if and only if $n$ is congruent to $\pm 1$ modulo $6$. 

\noindent
{\rm (2)} $((1\ 2\ 3\ 4\ 5),n)$ is realizable. 

\noindent
{\rm (3)} $((1\ 2)(3\ 4),n)$ is realizable. 

\noindent
{\rm (4)} $((1\ 2\ 3\ 4),n)$ is realizable. 

\noindent
{\rm (5)} $((1\ 2),n)$ is realizable if and only if $n=\pm 1$. 

\noindent
{\rm (6)} $((1\ 2)(3\ 4\ 5),n)$ is realizable if and only if $n=\pm 1$. 
\end{Theorem}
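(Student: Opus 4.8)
The plan is to prove each of the six assertions by splitting it into a realizability part and an obstruction part, using Proposition~\ref{conjugate1} to work with the chosen conjugacy-class representatives throughout. For the realizability of all odd $n$ in (2), (3), (4), and of $n=\pm1$ in (1), (5), (6), I would exhibit explicit $\sigma$-symmetric regular diagrams: starting from a small standard $\sigma$-symmetric diagram of Simon invariant $\pm1$ and, when all odd values are wanted, inserting a local modification that is carried into itself by the ambient homeomorphism realizing $\sigma$ and that changes $\mathcal{L}$ by $\pm2$. Whether such a modification is available depends on how $\sigma$ acts on the set of pairs of non-adjacent edges of $K_{5}$ (the index set of the Simon invariant): for $(1\ 2\ 3\ 4)$, $(1\ 2\ 3\ 4\ 5)$ and $(1\ 2)(3\ 4)$ a step of $\pm2$ can be arranged (e.g.\ near the fixed circle of a rotation), while for $(1\ 2\ 3)$ a crossing change has to be replicated around a free orbit of such pairs and the step is forced up to $6$, which is exactly the restriction appearing in~(1).

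For the obstructions the first observation is the transformation law of the Simon invariant under relabelling and mirroring, $\mathcal{L}(f\circ\sigma)=\mathrm{sgn}(\sigma)\,\mathcal{L}(f)$ and $\mathcal{L}(\bar f)=-\mathcal{L}(f)$, which follow from the definition in \S2. If $f\circ\sigma=\varphi\circ f$, comparing these forces $\varphi$ to be orientation-reversing when $\sigma$ is an odd permutation and orientation-preserving when $\sigma$ is even --- otherwise $\mathcal{L}(f)=0$, contradicting that $\mathcal{L}(f)$ is always odd. This settles the parity of $\varphi$ in all six cases but produces no congruence, so it cannot by itself account for (1), (5) or (6).

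The essential extra ingredient is an identity of Conway--Gordon type for spatial $K_{5}$, of the form
\[
8\sum_{\gamma}a_{2}\!\bigl(f(\gamma)\bigr)=\mathcal{L}(f)^{2}-1,
\]
where the sum runs over the twelve Hamiltonian cycles $\gamma$ of $K_{5}$ and $a_{2}$ denotes the second coefficient of the Conway polynomial. If $f\circ\sigma=\varphi\circ f$ then $\varphi$ carries $f(\gamma)$ onto $f(\sigma(\gamma))$, so the knots in a single $\langle\sigma\rangle$-orbit of Hamiltonian cycles are mutually ambient isotopic up to mirror image and therefore share the same $a_{2}$; consequently $\sum_{\gamma}a_{2}(f(\gamma))$ is divisible by the size of every orbit. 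For $\sigma=(1\ 2\ 3)$ the induced action on the twelve Hamiltonian cycles is free, so $3$ divides $\sum_{\gamma}a_{2}(f(\gamma))$; the identity then gives $\mathcal{L}(f)^{2}\equiv1\pmod3$, and with $\mathcal{L}(f)$ odd this is $\mathcal{L}(f)\equiv\pm1\pmod6$, which proves (1).

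For (5) and (6) the same orbit-divisibility argument only forces $\mathcal{L}(f)^{2}\equiv1$ modulo $16$ (respectively modulo $48$), which is much weaker than $\mathcal{L}(f)=\pm1$; \emph{upgrading this to equality is the step I expect to be the main obstacle.} The aim is to show that for $\sigma=(1\ 2)$ and $\sigma=(1\ 2)(3\ 4\ 5)$ one in fact has $\sum_{\gamma}a_{2}(f(\gamma))=0$, whence $\mathcal{L}(f)^{2}=1$. Both automorphisms leave the triangle on $\{3,4,5\}$ and the edge joining $1$ and $2$ invariant, and one would like to exploit this --- together with the already-known orientation-reversing behaviour of $\varphi$ --- to show that the $\langle\sigma\rangle$-orbits of Hamiltonian cycles fall into mirror-image pairs whose $a_{2}$-contributions cancel. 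Making this cancellation precise (in effect, reducing the symmetry to a rigid model, or finding a substitute for such a reduction, and checking that no Hamiltonian cycle survives unpaired) is the delicate point; once $\mathcal{L}(f)=\pm1$ is forced, realizability of both values is provided by a single $\sigma$-symmetric diagram with $\mathcal{L}=1$ and its mirror image.
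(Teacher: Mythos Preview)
Your overall plan matches the paper's for the constructive parts and for~(1), but two points need correction.

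First, the Conway--Gordon type identity you invoke is not quite right: the formula proved in \S2 is
\[
\alpha_{\omega}(f)\;=\;\sum_{\text{$5$-cycles}}a_{2}\bigl(f(\gamma)\bigr)\;-\;\sum_{\text{$4$-cycles}}a_{2}\bigl(f(\gamma)\bigr)\;=\;\frac{\mathcal{L}(f)^{2}-1}{8},
\]
with the fifteen $4$-cycles entering with weight $-1$; the Hamiltonian sum alone does \emph{not} equal $(\mathcal{L}(f)^{2}-1)/8$.  Your argument for~(1) survives this correction, because under $(1\ 2\ 3)$ the $4$-cycles also split into orbits of size~$3$ (five of them), so $\alpha_{\omega}(f)\equiv 0\pmod 3$ and Lemma~\ref{mod6} applies.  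This is exactly how the paper proves~(1).

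The real gap is in your plan for~(5) and~(6).  The proposed cancellation ``the $\langle\sigma\rangle$-orbits of Hamiltonian cycles fall into mirror-image pairs whose $a_{2}$-contributions cancel'' cannot work: for a knot $K$ one has $\nabla_{K^{*}}(z)=\nabla_{K}(-z)=\nabla_{K}(z)$ (the Conway polynomial of a knot is even in $z$), so $a_{2}(K^{*})=a_{2}(K)$.  Mirror pairs therefore \emph{add}, not cancel, and there is no mechanism in this approach to force $\alpha_{\omega}(f)=0$.  (With the correct formula, the three $4$-cycles $[1\ 3\ 2\ 4],\,[1\ 3\ 2\ 5],\,[1\ 4\ 2\ 5]$ are fixed by $(1\ 2)$, so even divisibility of $\alpha_{\omega}(f)$ by $2$ fails in general.)  The paper obtains~(5) by first invoking Theorem~\ref{soma} to replace $f$ by a homologous embedding $g$ with a \emph{periodic} $\varphi$, then using Smith's Theorem~\ref{smith} to conclude $\mathrm{Fix}(\varphi)\cong\mathbb{S}^{2}$ (it contains $g(3),g(4),g(5)$), and finally reading off $\mathcal{L}(g)=\pm1$ from a projection almost perpendicular to this fixed sphere; (6) is then reduced to~(5) via $\bigl((1\ 2)(3\ 4\ 5)\bigr)^{3}=(1\ 2)$.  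The paper explicitly remarks that finding an elementary proof of~(5)--(6) avoiding these $3$-manifold results is open, so your hoped-for ``substitute for such a reduction'' is not currently available.
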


\begin{Theorem}\label{K33}
Let $G$ be $K_{3,3}$ and $n$ an odd integer. Then we have the following. 

\noindent
{\rm (1)} $((1\ 2\ 3),n)$ is realizable if and only if $n$ is congruent to $\pm 1$ modulo $6$. 

\noindent
{\rm (2)} $((1\ 2)(4\ 5),n)$ is realizable. 

\noindent
{\rm (3)} $((1\ 2\ 3)(4\ 5\ 6),n)$ is realizable. 

\noindent
{\rm (4)} $((1\ 4)(2\ 5)(3\ 6),n)$ is realizable. 

\noindent
{\rm (5)} $((1\ 4\ 2\ 5\ 3\ 6),n)$ is realizable. 

\noindent
{\rm (6)} $((1\ 4\ 2\ 5)(3\ 6),n)$ is realizable. 

\noindent
{\rm (7)} $((1\ 2),n)$ is realizable if and only if $n=\pm 1$. 

\noindent
{\rm (8)} $((1\ 2)(4\ 5\ 6),n)$ is realizable if and only if $n=\pm 1$. 
\end{Theorem}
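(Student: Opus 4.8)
By Propositions \ref{conjugate1} and \ref{conjugate2}(2) it suffices to treat the eight displayed nontrivial conjugacy–class representatives, and I would split the work into equivariant \emph{constructions} (realizing the indicated values of $n$) and \emph{obstruction} arguments (ruling out the remaining values in (1), (7), (8)). Parts (1), (7), (8) have exactly the form of parts (1), (5), (6) of Theorem \ref{K5}, so I expect the two theorems to be handled by a common scheme, with the Simon–invariant bookkeeping carried out separately for $K_{3,3}$.

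\emph{Constructions.} For a representative $\sigma$ of order $k$ the plan is: first, exhibit a ``seed'' spatial embedding $f_{0}$ that is rigidly $\sigma$-symmetric via a standard periodic homeomorphism $\varphi_{0}$ of $\mathbb{S}^{3}$ (a rotation, or a rotation followed by a reflection, about an unknotted circle), with $|\mathcal{L}(f_{0})|=1$ — for the vertex-transitive $\sigma$'s such as $(1\,4\,2\,5\,3\,6)$ one realizes $K_{3,3}$ as a round hexagon together with its three long diagonals pushed off the rotation axis, and for the others one uses the evident partial symmetries of the standard almost-planar picture of $K_{3,3}$; second, modify $f_{0}$ inside a $\varphi_{0}$-invariant ball, or a $\varphi_{0}$-orbit of disjoint balls, by inserting twisting. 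A full twist inserted in one fundamental domain and propagated by $\varphi_{0}$ changes $\mathcal{L}$ by an integer multiple of $k$, whereas twisting that can be localized on a $\varphi_{0}$-invariant arc (or near a fixed point of $\varphi_{0}$) changes $\mathcal{L}$ by a single unit. For $\sigma\in\left\{(1\,2)(4\,5),\ (1\,2\,3)(4\,5\,6),\ (1\,4)(2\,5)(3\,6),\ (1\,4\,2\,5\,3\,6),\ (1\,4\,2\,5)(3\,6)\right\}$ a unit-change move should be available, giving every odd $n$; for $\sigma=(1\,2\,3)$ only $k=3$-changes from the $\pm1$ seed are available, which together with the parity of $\mathcal{L}$ yields precisely $n\equiv\pm1\pmod 6$; and for $\sigma\in\left\{(1\,2),\ (1\,2)(4\,5\,6)\right\}$ the seed already realizes $n=\pm1$.

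\emph{Obstructions.} For the converses in (1), (7), (8) I would use the expression, recalled in \S2, of $\mathcal{L}(f)$ as a signed sum $\mathcal{L}(f)=\sum_{P}\varepsilon_{P}\lambda_{P}(f)$ over the eighteen unordered pairs $P=\{e,e'\}$ of mutually disjoint edges of $K_{3,3}$, where $\varepsilon_{P}=\pm1$ is fixed and $\lambda_{P}(f)$ is the sum of signs of the $f(e)$--$f(e')$ crossings of a diagram. An automorphism $\sigma$ permutes this index set, and the orbit structure is special in the critical cases: every disjoint pair of edges of $K_{3,3}$ necessarily uses vertex $1$ or vertex $2$ (the only edges missing both pass through vertex $3$ and so meet one another), hence under $\sigma=(1\,2)$ the eighteen pairs fall into nine $2$-element orbits that $\sigma$ interchanges, with a similar coarser picture for $(1\,2)(4\,5\,6)$ and a division into families of size divisible by $3$ for $(1\,2\,3)$. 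The equation $f\circ\sigma=\varphi\circ f$ forces $\mathcal{L}(f\circ\sigma)=\mathcal{L}(\varphi\circ f)$; writing the left side as $\mathrm{sgn}_{\mathcal{L}}(\sigma)\,\mathcal{L}(f)$ (the induced sign on the Wu/Simon invariant) and the right side as $(\pm1)\mathcal{L}(f)$ according to the orientation behaviour of $\varphi$, and then — the real point — choosing a diagram of $f$ compatible with $\varphi$ so that the contributions of paired orbits of $P$'s are forced to cancel except for an unavoidable residue, one should obtain $|\mathcal{L}(f)|=1$ for (7) and (8) and $\mathcal{L}(f)\not\equiv0\pmod 3$, hence $\mathcal{L}(f)\equiv\pm1\pmod 6$, for (1).

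\emph{Main obstacle.} The genuinely delicate step is the cancellation argument for (7) and (8): the crude global identity $\mathrm{sgn}_{\mathcal{L}}(\sigma)=\mathrm{sgn}(\varphi)$ gives no bound on $|\mathcal{L}(f)|$ by itself, so one must enter the defining sum with the correct signs $\varepsilon_{P}$ and exploit the $2$-orbit structure to see that all but one unit of the invariant is killed; and this is complicated by the fact that a homeomorphism $\varphi$ realizing $(1\,2)$ cannot be taken to be a PL involution of $\mathbb{S}^{3}$ (it would fix pointwise a subtree of $f(K_{3,3})$ containing a triple point, impossible for a PL involution), so the ``compatible diagram'' must be produced by hand rather than from a normal form. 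On the construction side the harder cases are the higher-order symmetries (5) and (6), where one must both exhibit the rigid realization of an order-$6$ (resp. order-$4$) automorphism of $K_{3,3}$ and verify the value of $\mathcal{L}$ after equivariant twisting, using a move finer than a single fundamental-domain full twist in order to reach all odd $n$.
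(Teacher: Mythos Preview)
Your construction sketch is broadly in line with the paper: explicit rigidly symmetric families with a twist parameter (a M\"obius strip model for the order-$6$ and order-$3$ cases, a $\pi/2$--rotational model for the order-$4$ and order-$2$ cases) realizing the claimed values of $\mathcal{L}$. The obstruction arguments, however, diverge from the paper in both cases and contain genuine gaps.

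\textbf{Part (1), ``only if''.} You propose to group the eighteen disjoint edge--pairs of $K_{3,3}$ into $(1\,2\,3)$-orbits and read off a congruence on $\mathcal{L}(f)$. But the summands $l(f(x),f(y))$ are diagram-dependent, and nothing forces the three values along an orbit to agree in a fixed diagram; even if they did, orbit-grouping would suggest $3\mid\mathcal{L}(f)$ rather than the desired $3\nmid\mathcal{L}(f)$. The paper avoids this entirely: it passes to the $\alpha$-invariant $\alpha_{\omega}(f)=\sum_{\gamma}\omega(\gamma)\,a_{2}(f(\gamma))$, where the summands are genuine knot invariants. Under $(1\,2\,3)$ every $4$- and $6$-cycle of $K_{3,3}$ lies in an orbit of size exactly three, and $a_{2}(f(\gamma))=a_{2}(f(\sigma\gamma))$ because $f(\sigma\gamma)=\varphi(f(\gamma))$; hence $3\mid\alpha_{\omega}(f)$, and then the identity $\alpha_{\omega}(f)=(\mathcal{L}(f)^{2}-1)/8$ (Lemma~\ref{mod6}) yields $\mathcal{L}(f)\equiv\pm1\pmod 6$. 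Your outline does not invoke $\alpha_{\omega}$ or the relation (\ref{simon_alpha}), and without them the argument does not close.

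\textbf{Parts (7) and (8), ``only if''.} Your ``main obstacle'' paragraph rests on a false premise. You assert that a PL involution of $\mathbb{S}^{3}$ realizing $(1\,2)$ is impossible because its fixed set would have to contain a tripod. But Remark~\ref{wu_sign} gives $\mathcal{L}(f\circ(1\,2))=-\mathcal{L}(f)$, so any $\varphi$ with $\varphi\circ f=f\circ(1\,2)$ is \emph{orientation-reversing}; by Smith's theorem (Theorem~\ref{smith}) a periodic such $\varphi$ has fixed set $\mathbb{S}^{0}$ or $\mathbb{S}^{2}$, and a tripod sits happily in $\mathbb{S}^{2}$. The paper's proof uses exactly this: Theorem~\ref{soma} (Soma) replaces $f$ by a homologous rigidly $(1\,2)$-symmetric $g$; then $\mathrm{Fix}(\varphi)\cong\mathbb{S}^{2}$ since it contains $g(3),g(4),g(5),g(6)$; projecting nearly perpendicular to this sphere gives a diagram in which the only crossings contributing to $\mathcal{L}(g)$ lie among edges whose pairs carry the same sign, forcing $\mathcal{L}(g)=\pm1$. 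Part (8) then reduces to (7) via $\bigl((1\,2)(4\,5\,6)\bigr)^{3}=(1\,2)$. Your proposed ``cancellation in the defining sum from a compatible diagram produced by hand'' is precisely the step that requires the periodic normal form supplied by Soma's theorem; the paper itself flags that an elementary proof avoiding Theorems~\ref{soma} and~\ref{smith} is an open problem, so you should not expect your ad hoc diagram argument to succeed.
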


Furthermore, each realizable pair $(\sigma,n)$ may be realized by a rigidly $\sigma$-symmetric spatial embedding of $G$. By combining Theorems \ref{K5} and \ref{K33} with Propositions \ref{conjugate1} and \ref{conjugate2}, we can give a complete answer to Question \ref{question}. We can also determine when $(\sigma,n)$ is realized by a $\sigma$-achiral spatial embedding of $G$ as follows. 

\begin{Theorem}\label{achiral}
{\rm (1)} Let $\sigma$ be an automorphism of $K_{5}$ and $n$ an odd integer. Suppose that $(\sigma,n)$ is realizable. Then $(\sigma,n)$ is realized by a $\sigma$-achiral spatial embedding of $K_{5}$ if and only if $\sigma$ is conjugate to $(1\ 2\ 3\ 4),\ (1\ 2)$ or $(1\ 2)(3\ 4\ 5)$. 

\noindent
{\rm (2)} Let $\sigma$ be an automorphism of $K_{3,3}$ and $n$ an odd integer. Suppose that $(\sigma,n)$ is realizable. Then $(\sigma,n)$ is realized by a $\sigma$-achiral spatial embedding of $K_{3,3}$ if and only if $\sigma$ is conjugate to $(1\ 4\ 2\ 5)(3\ 6),\ (1\ 2)$ or $(1\ 2)(4\ 5\ 6)$.
\end{Theorem}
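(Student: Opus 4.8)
The plan is to treat the two directions separately, using the two preceding main theorems (which give, for each relevant conjugacy class, an explicit realizing spatial embedding) together with an obstruction coming from the behaviour of the Simon invariant under orientation-reversing homeomorphisms. For the ``if'' direction, I would start from the realizing embeddings constructed in the proofs of Theorems \ref{K5} and \ref{K33} for the classes $(1\ 2\ 3\ 4)$, $(1\ 2)$, $(1\ 2)(3\ 4\ 5)$ (for $K_5$) and $(1\ 4\ 2\ 5)(3\ 6)$, $(1\ 2)$, $(1\ 2)(4\ 5\ 6)$ (for $K_{3,3}$), and check by inspection of the diagrams that the ambient symmetry realizing $\sigma$ can in fact be taken to be orientation-reversing — indeed, in the $n=\pm1$ cases one expects a concrete rigid involution of ${\mathbb S}^3$ (a reflection in a $2$-sphere, or a product of a reflection with a rotation) that simultaneously realizes $\sigma$ on the graph; for the class $(1\ 2\ 3\ 4)$ with general $n$ one likewise expects the order-$4$ periodic map to be composable with (or already be) orientation-reversing. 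So the ``if'' direction reduces to exhibiting, for each of those six classes, the embedding already constructed and verifying that its symmetry is orientation-reversing, which is a routine diagram check rather than a new construction.

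For the ``only if'' direction I would argue by contradiction: suppose $f$ is $\sigma$-achiral, so $f\circ\sigma=\varphi\circ f$ with $\varphi$ orientation-reversing. The key algebraic input is the transformation rule for the Simon invariant: reversing orientation of ${\mathbb S}^3$ sends ${\mathcal L}(f)$ to $-{\mathcal L}(f)$, while precomposing $f$ with an automorphism $\sigma$ multiplies ${\mathcal L}(f)$ by a sign $\mathrm{sgn}_{\mathcal L}(\sigma)\in\{\pm1\}$ depending only on $\sigma$ (this sign is, up to conventions, the determinant of the induced map on the relevant homology/chain group, and is exactly the quantity controlling the realizability congruences in Theorems \ref{K5} and \ref{K33}). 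Equating, $\sigma$-achirality forces ${\mathcal L}(f)=\mathrm{sgn}_{\mathcal L}(\sigma)\cdot(-{\mathcal L}(f))$. When $\mathrm{sgn}_{\mathcal L}(\sigma)=+1$ this gives ${\mathcal L}(f)=0$, impossible since ${\mathcal L}(f)$ is odd, so those classes are never $\sigma$-achiral; when $\mathrm{sgn}_{\mathcal L}(\sigma)=-1$ no contradiction arises from this count alone. So the first task is to compute $\mathrm{sgn}_{\mathcal L}(\sigma)$ for each of the conjugacy-class representatives in Proposition \ref{conjugate2} and check that exactly the six ``exceptional'' classes above (and their conjugates) have $\mathrm{sgn}_{\mathcal L}=-1$, while all other realizable classes have $\mathrm{sgn}_{\mathcal L}=+1$. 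By Proposition \ref{conjugate1} (or rather its proof technique) it suffices to do this on representatives.

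There is, however, a gap in the argument as just sketched: among the classes with $\mathrm{sgn}_{\mathcal L}(\sigma)=-1$ we must still check whether the \emph{realizable} ones are all actually $\sigma$-achiral, and here the classes $(1\ 2\ 3)$ (for both graphs) are the subtle point — they satisfy the sign condition yet are claimed \emph{not} to appear in the achiral list, so there must be a finer obstruction than the global sign. I expect this to be the main obstacle. The resolution should come from combining the orientation-reversing hypothesis with the mod-$6$ realizability constraint: a $\sigma$-achiral embedding for $\sigma=(1\ 2\ 3)$ would be simultaneously $\sigma$-symmetric (forcing $n\equiv\pm1\bmod 6$ by Theorem \ref{K5}(1)/\ref{K33}(1)) and achiral with respect to a compatible reflection, and one should extract from the fixed-point structure of an orientation-reversing periodic map (or, if $\varphi$ is not assumed periodic, from a $G$-signature / Smith-theory argument applied to $\varphi^2$ or to the order-$6$ map $\varphi\circ$(rotation)) an extra congruence, e.g. $n\equiv\pm1\bmod$ something incompatible with $n\equiv\pm1\bmod 6$ unless $n=\pm1$, or a parity obstruction ruling it out entirely. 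Concretely I would analyze the order-$6$ element $(1\ 2\ 3)$ composed appropriately and use the classification of periodic maps of ${\mathbb S}^3$ (Smith theory: fixed point set of an orientation-reversing involution is a $2$-sphere or empty; of period-$3$ map a circle or empty) to constrain how the $K_5$- or $K_{3,3}$-subgraph can sit equivariantly, then compute the induced Simon invariant from the quotient picture and show it cannot be realized orientation-reversingly. Once that case is dispatched, assembling the theorem is just bookkeeping against Propositions \ref{conjugate1}, \ref{conjugate2} and Theorems \ref{K5}, \ref{K33}.
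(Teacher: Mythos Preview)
Your core idea --- comparing the effect of $\sigma$ and of $\varphi$ on ${\mathcal L}(f)$ via the sign $\mathrm{sgn}_{\mathcal L}(\sigma)$ --- is exactly the paper's argument, and once the signs are correctly computed the proof is a two-line deduction. The problem is that you have miscomputed one of the signs and then built an elaborate work-around for a gap that does not exist.

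Specifically, you assert that $(1\ 2\ 3)$ ``satisfies the sign condition'' (i.e.\ $\mathrm{sgn}_{\mathcal L}((1\ 2\ 3))=-1$) and therefore requires a finer Smith-theory obstruction. This is false: since $(1\ 2\ 3)^3=\mathrm{id}$ and $\mathrm{sgn}_{\mathcal L}$ is multiplicative, $\mathrm{sgn}_{\mathcal L}((1\ 2\ 3))^3=1$, so $\mathrm{sgn}_{\mathcal L}((1\ 2\ 3))=+1$. In fact the full computation is recorded in Remark~\ref{wu_sign}: $(\sigma\times\sigma)|_{D_2(G)}$ is orientation-reversing \emph{precisely} for the three listed conjugacy classes in each case, and for no others. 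With that in hand the sign identity ${\mathcal L}(f\circ\sigma)=\mathrm{sgn}_{\mathcal L}(\sigma)\,{\mathcal L}(f)={\mathcal L}(\varphi\circ f)=\pm{\mathcal L}(f)$, together with ${\mathcal L}(f)$ odd hence nonzero, forces $\varphi$ to be orientation-reversing if and only if $\mathrm{sgn}_{\mathcal L}(\sigma)=-1$. Both directions of the theorem follow immediately; your proposed fixed-point/Smith analysis for $(1\ 2\ 3)$ is unnecessary.

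A second, smaller point: your ``if'' direction via diagram inspection is redundant. The same sign argument shows that for the three listed classes \emph{every} realizing homeomorphism $\varphi$ is automatically orientation-reversing, so once $(\sigma,n)$ is realizable (as assumed), the realizing embedding is already $\sigma$-achiral. No new construction or inspection is needed.
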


Actually we will show that a realizable pair $(\sigma,n)$ in Theorem \ref{K5} (1), (2), (3) and Theorem \ref{K33} (1), (2), (3), (4), (5) can only be realized by an orientation-preserving self-homeomorphism of ${\mathbb S}^{3}$, and a realizable pair $(\sigma,n)$ in Theorem \ref{K5} (4), (5), (6) and Theorem \ref{K33} (6), (7), (8) can only be realized by an orientation-reversing self-homeomorphism of ${\mathbb S}^{3}$. 

In the next section we give a precise definition of the Simon invariant. In section $3$, we prove some propositions including Propositions \ref{conjugate1} and \ref{conjugate2} that are needed later. The proofs of Theorems \ref{K5}, \ref{K33} and \ref{achiral} are given in section $4$. 

\section{Simon invariant} 

Let $G$ be $K_{5}$ or $K_{3,3}$ where the vertices have a fixed numbering. In this section we review the Simon invariant of spatial embeddings of $G$. We give an orientation to each edge of $G$ as illustrated in Fig. \ref{K5K33label}, according to the numbering of the vertices. For an unordered pair $x,y$ of disjoint edges $K_{5}$, we define the sign $\varepsilon(x,y)$ by $\varepsilon(e_{i},e_{j})=1$, $\varepsilon(d_{k},d_{l})=-1$ and $\varepsilon(e_{i},d_{k})=-1$. For an unordered pair $x,y$ of disjoint edges of $K_{3,3}$, we also define the sign $\varepsilon(x,y)$ by $\varepsilon(c_{i},c_{j})=1$, $\varepsilon(b_{k},b_{l})=1$ and 
$\varepsilon(c_{i},b_{k})=1$ if $c_{i}$ and $b_{k}$ are parallel in Fig. \ref{K5K33label}, and $-1$ if they are anti-parallel. For a spatial embedding $f$ of $G$, we fix a regular diagram of $f$ and denote the sum of the signs of the crossing points between $f(x)$ and $f(y)$ by $l(f(x),f(y))$, where $x,y$ is an unordered pair of disjoint edges of $G$. Now we define an integer ${\mathcal L}(f)$ by 
\begin{eqnarray*}
{\mathcal L}(f)=\sum_{(x,y)}\varepsilon(x,y)l(f(x),f(y)), 
\end{eqnarray*}
where the summation is taken over all unordered pairs of disjoint edges of $G$. This integer ${\mathcal L}(f)$ is called the {\it Simon invariant} of $f$. Actually, this is an odd integer valued ambient isotopy invariant \cite[\sc Theorem 4.1]{taniyama94b} up to the numbering of the vertices. In paricular, for a different numbering of the vertices, the value of the Simon invariant may be different. The Simon invariant can also be described from a cohomological viewpoint as follows. See \cite[Example 2,4, Example 2.5]{taniyama95} for details. Let $C_{2}(X)$ be the {\it configuration space} of ordered two points of a topological space $X$, namely
\begin{eqnarray*}
C_{2}(X) = \left\{(x,y)\in X\times X~|~x\neq y\right\}. 
\end{eqnarray*}
Let $\iota$ be an involution on $C_{2}(X)$ defined by $\iota(x,y)=(y,x)$. Then we call the integral cohomology group of ${\rm Ker}(1 + \iota_{\sharp})$ the {\it skew-symmetric integral cohomology group} of the pair $(C_{2}(X),\iota)$ and denote it by $H^{*}(C_{2}(X),\iota)$. It is known that $H^{2}(C_{2}({\mathbb R}^{3}),\iota)\cong {\mathbb Z}$ \cite{wu60} and $H^{2}(C_{2}(G),\iota)\cong {\mathbb Z}$. We denote a generator of $H^{2}(C_{2}({\mathbb R}^{3}),\iota)$ by $\Sigma$. Let $f:G\to {\mathbb S}^{3}\setminus\left\{(0,0,0,1)\right\}$ be a spatial embedding of $G$, regarded as an embedding into ${\mathbb R}^{3}$. This embedding $f$ naturally induces an equivariant embedding $f\times f:C_{2}(G)\to C_{2}({\mathbb R}^{3})$ with respect to the action $\iota$ and therefore induces a homomorphism 
\begin{eqnarray*}
(f\times f)^{*}:H^{2}(C_{2}({\mathbb R}^{3}),\iota)
\longrightarrow H^{2}(C_{2}(G),\iota). 
\end{eqnarray*}
Then the Simon invariant ${\mathcal L}(f)$ coincides with $(f\times f)^{*}(\Sigma)$ up to sign. Thus the absolute value $|{\mathcal L}(f)|$ is an ambient isotopy invariant independent of the numbering of the vertices.

\begin{figure}[htbp]
      \begin{center}
\scalebox{0.375}{\includegraphics*{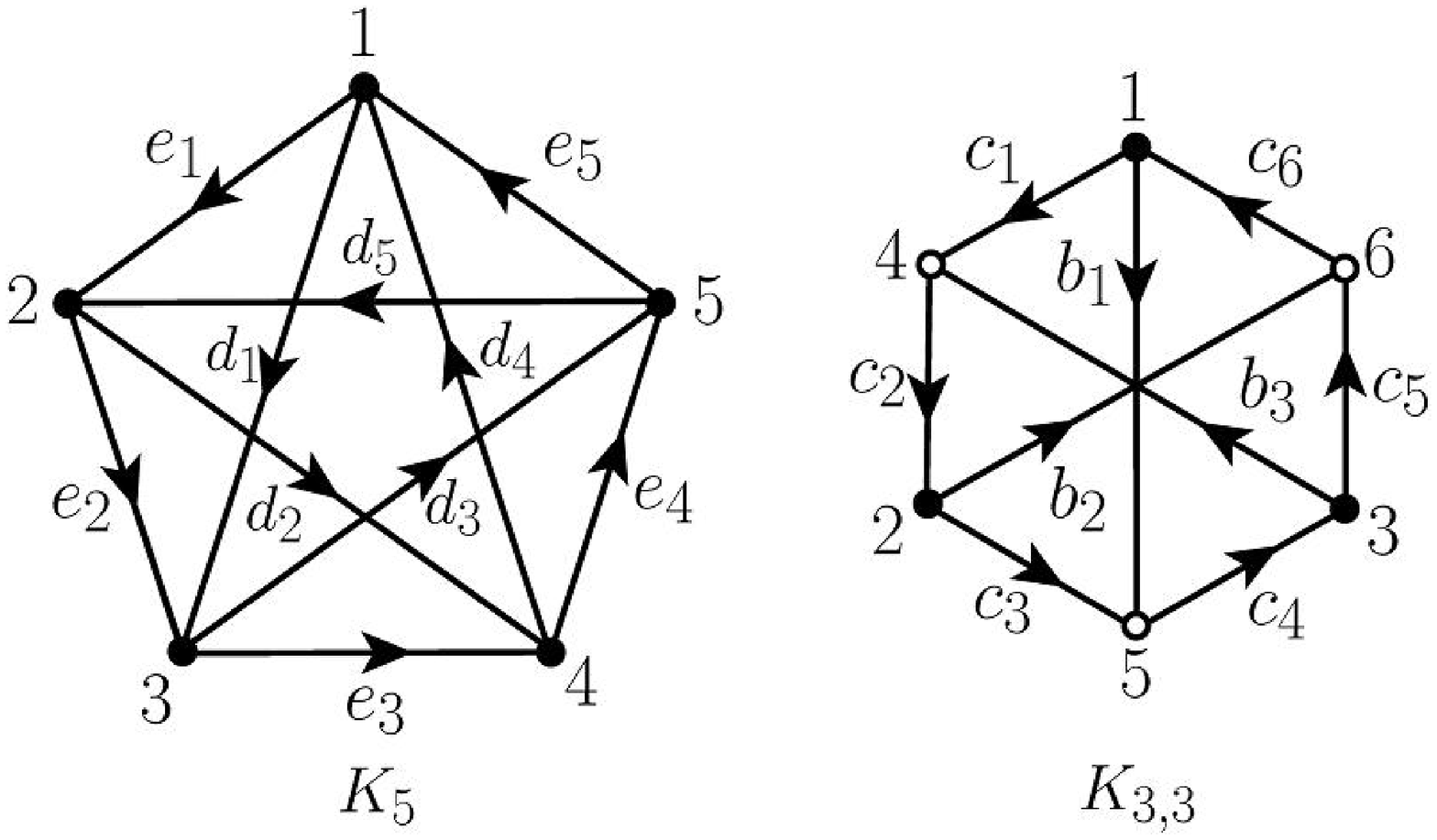}}
      \end{center}
   \caption{}
  \label{K5K33label}
\end{figure} 

The Simon invariant ${\mathcal L}(f)$ is also closely related to the constituent knots of $f$. Let $\Gamma(G)$ be the set of all cycles of $G$, where a {\it cycle} is a subgraph of $G$ which is homeomorphic to a circle. We say that a cycle is a {\it $k$-cycle} if it contains exactly $k$ edges. Let $\omega:\Gamma(G)\to {\mathbb Z}$ be the map defined by  
\begin{eqnarray*}
\omega(\gamma)=\left\{
       \begin{array}{@{\,}ll}
       1 & \mbox{(if $\gamma$ is a $5$-cycle)}\\
       -1 & \mbox{(if $\gamma$ is a $4$-cycle)}\\
       0 & \mbox{(if $\gamma$ is a $3$-cycle)}
       \end{array}
     \right.
\end{eqnarray*}
for $G=K_{5}$, and 
\begin{eqnarray*}
\omega(\gamma)=\left\{
       \begin{array}{@{\,}ll}
       1 & \mbox{(if $\gamma$ is a $6$-cycle)}\\
       -1 & \mbox{(if $\gamma$ is a $4$-cycle)}\\
       \end{array}
     \right.
\end{eqnarray*}
for $G=K_{3,3}$. For a spatial embedding $f$ of $G$, we define an integer $\alpha_{\omega}(f)$ by 
\begin{eqnarray*}
\alpha_{\omega}(f)=\sum_{\gamma\in \Gamma(G)}\omega(\gamma)a_{2}(f(\gamma)), 
\end{eqnarray*}
where $a_{2}(J)$ is the second coefficient of the {\it Conway polynomial} of a knot $J$. This integer $\alpha_{\omega}(f)$ is called the {\it $\alpha$-invariant} of $f$ \cite{taniyama94a}. In \cite{motohashi-taniyama97}, Motohashi and the second author showed that if ${\mathcal L}(f)=2j-1$ then 
\begin{eqnarray*}
\alpha_{\omega}(f)=\frac{j(j-1)}{2}. 
\end{eqnarray*}
This implies that 
\begin{eqnarray}
\alpha_{\omega}(f)=\frac{{{\mathcal L}(f)}^{2}-1}{8}\label{simon_alpha}
\end{eqnarray}
for a spatial embedding $f$ of $G$. Then we have the following. 

\begin{Lemma}\label{mod6}
Let $G=K_{5}$ or $K_{3,3}$, and let $f$ be a spatial embedding of $G$. Then $\alpha_{\omega}(f)$ is a multiple of $3$ if and only if ${\mathcal L}(f)\equiv \pm 1\pmod{6}$. 
\end{Lemma}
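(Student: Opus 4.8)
The plan is to reduce the statement to an elementary number-theoretic fact about the quadratic form appearing in equation~\eqref{simon_alpha}. Since $f$ is a spatial embedding of $K_5$ or $K_3,3$, Lemma~\ref{mod6} concerns only the integer $\alpha_{\omega}(f)$, which by \eqref{simon_alpha} equals $(\mathcal{L}(f)^2-1)/8$, and $\mathcal{L}(f)$ is an odd integer. So it suffices to prove: for an odd integer $n$, the integer $(n^2-1)/8$ is divisible by $3$ if and only if $n \equiv \pm 1 \pmod 6$.

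First I would write $n = 2j-1$ so that $(n^2-1)/8 = j(j-1)/2$, a triangular number, or else simply work directly with $n$. The cleanest route is to analyze $n$ modulo $6$. An odd integer is congruent to one of $1, 3, 5 \pmod 6$. I would compute $n^2 \pmod{24}$ in each case (working mod $24$ rather than mod $6$ because we divide by $8$): if $n \equiv \pm 1 \pmod 6$ then $n \equiv \pm 1, \pm 5 \pmod{12}$ and one checks $n^2 \equiv 1 \pmod{24}$, so $(n^2-1)/8$ is divisible by $3$; if $n \equiv 3 \pmod 6$ then $n \equiv 3$ or $9 \pmod{12}$, and $n^2 \equiv 9 \pmod{24}$, so $(n^2-1)/8 \equiv 1 \pmod 3$, hence not divisible by $3$. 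Equivalently and perhaps more transparently: $3 \mid (n^2-1)/8$ iff $3 \mid n^2-1$ (since $\gcd(3,8)=1$) iff $n^2 \equiv 1 \pmod 3$ iff $3 \nmid n$ iff $n \not\equiv 3 \pmod 6$ iff $n \equiv \pm 1 \pmod 6$ (the last step using that $n$ is odd). This second phrasing avoids the mod-$24$ bookkeeping entirely and is the version I would present.

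I do not anticipate a genuine obstacle here; the only thing to be careful about is the interplay between the oddness of $\mathcal{L}(f)$ and the reduction modulo $6$ versus modulo $3$. Specifically, the condition ``$3 \nmid n$'' for a general integer is ``$n \equiv 1, 2, 4, 5 \pmod 6$'', but since we already know $n$ is odd, this collapses to exactly ``$n \equiv \pm 1 \pmod 6$'', and it is worth spelling this out so the equivalence in the Lemma is visibly tight. I would also note at the outset that the existence of the relevant integer-valued quantities ($\mathcal{L}(f)$ an odd integer, $\alpha_{\omega}(f)$ an integer satisfying \eqref{simon_alpha}) is exactly what the preceding discussion of this section provides, so the proof is purely the arithmetic statement above applied to $n = \mathcal{L}(f)$.
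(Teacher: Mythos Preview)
Your proposal is correct and takes essentially the same approach as the paper: both reduce to the identity ${\mathcal L}(f)^2-1=8\alpha_\omega(f)$ from \eqref{simon_alpha} and then use that $\gcd(3,8)=1$ to pass between divisibility of $\alpha_\omega(f)$ by $3$ and a congruence on ${\mathcal L}(f)$. The paper phrases the last step as ``$m\equiv\pm1\pmod 6$ iff $m^2\equiv 1\pmod 6$'' and ``$6\mid 8\alpha$ iff $3\mid\alpha$'', while you phrase it as ``$3\mid(n^2-1)/8$ iff $3\nmid n$'' and then invoke oddness of $n$ to get the mod-$6$ statement; these are the same computation read slightly differently.
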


\begin{proof}
It is easy to check that $m \equiv \pm 1\pmod{6}$ if and only if $m^{2} \equiv \pm 1\pmod{6}$. Therefore ${\mathcal L}(f)\equiv \pm 1\pmod{6}$ if and only if ${\mathcal L}(f)^2-1$ is a multiple of $6$. By (\ref{simon_alpha}) we have ${\mathcal L}(f)^2-1=8\alpha_\omega(f)$. Since $8 \alpha_\omega(f)$ is a multiple of $6$ if and only if $\alpha_\omega(f)$ is a multiple of $3$ we have the desired conclusion. 
\end{proof}

\section{Conjugacy classes in ${\rm Aut}(G)$ and Simon invariants} 

In this section we discuss the relationship between conjugacy classes in ${\rm Aut}(G)$ and Simon invariants. 

\begin{Lemma}\label{mirror}
Let $G=K_{5}$ or $K_{3,3}$. For an automorphism $\sigma$ of $G$ and an odd integer $n$, if $(\sigma,n)$ is realizable then $(\sigma,-n)$ is realizable. 
\end{Lemma}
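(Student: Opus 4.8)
The plan is to deduce the statement from the behavior of the Simon invariant under the mirror-image operation. Suppose $(\sigma,n)$ is realizable, so there is a spatial embedding $f$ of $G$ with a self-homeomorphism $\varphi$ of ${\mathbb S}^{3}$ satisfying $f\circ\sigma=\varphi\circ f$ and ${\mathcal L}(f)=n$. Let $\rho$ be an orientation-reversing self-homeomorphism of ${\mathbb S}^{3}$ (for instance a reflection in a $2$-sphere), and set $g=\rho\circ f$, the mirror image of $f$. The first step is to record how ${\mathcal L}$ changes under mirroring: reversing the ambient orientation reverses the sign of every crossing in a regular diagram while leaving the edge orientations and the combinatorial signs $\varepsilon(x,y)$ unchanged, so $l(g(x),g(y))=-l(f(x),f(y))$ for every pair of disjoint edges, and hence ${\mathcal L}(g)=-{\mathcal L}(f)=-n$.

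The second step is to check that $g$ is still $\sigma$-symmetric. From $f\circ\sigma=\varphi\circ f$ we get
\begin{eqnarray*}
g\circ\sigma=\rho\circ f\circ\sigma=\rho\circ\varphi\circ f=(\rho\circ\varphi\circ\rho^{-1})\circ(\rho\circ f)=\psi\circ g,
\end{eqnarray*}
where $\psi=\rho\circ\varphi\circ\rho^{-1}$ is again a self-homeomorphism of ${\mathbb S}^{3}$ (and is periodic of the same order if $\varphi$ is periodic, which also gives the ``rigidly'' version). Therefore $g$ is a $\sigma$-symmetric spatial embedding of $G$ with ${\mathcal L}(g)=-n$, which shows that $(\sigma,-n)$ is realizable.

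There is essentially no obstacle here; the only point requiring a small amount of care is the first step, namely that mirroring negates ${\mathcal L}$. This is immediate from the crossing-sign description of ${\mathcal L}(f)$ given in Section 2 once one observes that the edge orientations of Fig. \ref{K5K33label}, and hence the signs $\varepsilon(x,y)$, are intrinsic to $G$ and are unaffected by an ambient reflection; alternatively it follows from the cohomological description, since $(\rho\circ f)\times(\rho\circ f)$ pulls back the generator $\Sigma$ of $H^{2}(C_{2}({\mathbb R}^{3}),\iota)$ to its negative. Either way the conclusion ${\mathcal L}(g)=-{\mathcal L}(f)$ is standard, and the rest is the formal conjugation computation above.
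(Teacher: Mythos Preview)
Your proof is correct and follows essentially the same route as the paper: define the mirror image $g=\rho\circ f$, observe that ${\mathcal L}(g)=-{\mathcal L}(f)$ from the crossing-sign definition, and verify $\sigma$-symmetry of $g$ via the conjugation $\psi=\rho\circ\varphi\circ\rho^{-1}$. The paper presents exactly this argument (with a commutative diagram for the conjugation step), so there is nothing to add.
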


\begin{proof}
Let $\rho$ be an orientation-reversing self-homeomorphism of ${\mathbb S}^{3}$ defined by $\rho(x_{1},x_{2},x_{3},x_{4})=(x_{1},x_{2},x_{3},-x_{4})$, and $f$ a spatial embedding of $G$. We call $\rho\circ f$ the {\it mirror image embedding} of $f$ and denote it by $f!$. If $(\sigma,n)$ is realized by $f$, there exists a self-homeomorphism $\varphi$ of ${\mathbb S}^{3}$ such that $f\circ \sigma=\varphi\circ f$. Then the following diagram is commutative. 
\[\xymatrix{
G \ar[d]_{\sigma} \ar[r]^{f} & 
{\mathbb S}^{3} \ar[d]_{\varphi} \ar[r]^{\rho}  & 
{\mathbb S}^{3} \ar[d]^{\rho\circ\varphi\circ\rho^{-1}} & \\
G \ar[r]_{f} & 
{\mathbb S}^{3} \ar[r]_{\rho} &
{\mathbb S}^{3} & 
}
\]
Thus, $f!$ is $\sigma$-symmetric. By the definition of the Simon invariant, we can see that ${\mathcal L}(f!)=-{\mathcal L}(f)=-n$. Therefore $(\sigma,-n)$ is realizable. 
\end{proof}

\begin{Lemma}\label{auto_simon}
Let $G=K_{5}$ or $K_{3,3}$, and $f$ a spatial embedding of $G$. Then ${\mathcal L}(f\circ \xi)=\pm {\mathcal L}(f)$ for any automorphism $\xi$ of $G$. 
\end{Lemma}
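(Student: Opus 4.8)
The plan is to deduce the statement directly from the cohomological description of the Simon invariant recalled in Section 2, which makes the effect of a relabeling of the vertices transparent. First I would observe that $f\circ\xi$ and $f$ are spatial embeddings of $G$ with the same image, $f(G)=(f\circ\xi)(G)$: passing from $f$ to $f\circ\xi$ amounts to relabeling the vertices of $G$ by $\xi$. Since, as recalled in Section 2, the absolute value $|\mathcal{L}(\cdot)|$ of the Simon invariant is independent of the numbering of the vertices, this already gives $|\mathcal{L}(f\circ\xi)|=|\mathcal{L}(f)|$, and since both sides are (odd) integers we conclude $\mathcal{L}(f\circ\xi)=\pm\mathcal{L}(f)$.

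To carry this out at the level of detail needed, I would argue as follows. Recall that $\mathcal{L}(f)$ equals, up to sign, the integer $(f\times f)^{*}(\Sigma)$ under a fixed identification $H^{2}(C_{2}(G),\iota)\cong\mathbb{Z}$, where $\Sigma$ generates $H^{2}(C_{2}(\mathbb{R}^{3}),\iota)\cong\mathbb{Z}$. The automorphism $\xi$ of $G$, regarded as a self-homeomorphism of $G$, induces an $\iota$-equivariant self-homeomorphism $\xi\times\xi$ of $C_{2}(G)$, hence an automorphism $(\xi\times\xi)^{*}$ of $H^{2}(C_{2}(G),\iota)\cong\mathbb{Z}$; since every automorphism of $\mathbb{Z}$ is $\pm\mathrm{id}$, we have $(\xi\times\xi)^{*}=\pm\mathrm{id}$. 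Then from $(f\circ\xi)\times(f\circ\xi)=(f\times f)\circ(\xi\times\xi)$ and functoriality of the pullback,
\[
\mathcal{L}(f\circ\xi)=\pm\big((f\circ\xi)\times(f\circ\xi)\big)^{*}(\Sigma)=\pm(\xi\times\xi)^{*}\big((f\times f)^{*}(\Sigma)\big)=\pm(f\times f)^{*}(\Sigma)=\pm\mathcal{L}(f),
\]
as desired.

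I do not expect a genuine obstacle here, since everything rests on facts already established in Section 2. The one point that would otherwise require care — whether $\xi$ induces a vertex numbering compatible with the edge-orientation conventions of Fig. \ref{K5K33label}, which for $K_{3,3}$ is not automatic because an automorphism may interchange the two parts — is bypassed by the cohomological formulation, in which no choice of orientations or numbering enters beyond the sign ambiguity already present. As a more hands-on alternative, one could instead verify the claim from the formula $\mathcal{L}(f)=\sum_{(x,y)}\varepsilon(x,y)\,l(f(x),f(y))$ by checking, on a generating set of $\mathrm{Aut}(G)$, how $\xi$ permutes the unordered pairs of disjoint edges and how it changes the signs $\varepsilon(x,y)$ and the induced edge orientations; this also works but involves more bookkeeping than the argument above.
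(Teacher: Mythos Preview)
Your proposal is correct and follows essentially the same route as the paper's proof: both use that $\xi\times\xi$ is an $\iota$-equivariant self-homeomorphism of $C_{2}(G)$, so $(\xi\times\xi)^{*}$ is an automorphism of $H^{2}(C_{2}(G),\iota)\cong\mathbb{Z}$ and hence $\pm\mathrm{id}$, and then functoriality of the pullback finishes the argument. The paper handles the ``up to sign'' issue by choosing a suitable generator $\Sigma$ so that $\mathcal{L}(f)=(f\times f)^{*}(\Sigma)$ exactly, whereas you carry the $\pm$ through the chain; this is a cosmetic difference only.
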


\begin{proof}
The automorphism $\xi$ naturally induces an equivariant homeomorphism $\xi\times\xi:C_{2}(G)\to C_{2}(G)$ with respect to the action $\iota$ and therefore induces an isomorphism 
\begin{eqnarray*}
(\xi\times \xi)^{*}:H^{2}(C_{2}(G),\iota)\stackrel{\cong}{\to} H^{2}(C_{2}(G),\iota).
\end{eqnarray*}
Then we have  
\begin{eqnarray*}
{\mathcal L}(f\circ \xi)=((f\circ\xi)\times (f\circ\xi))^{*}(\Sigma)=(\xi\times \xi)^{*}((f\times f)^{*}(\Sigma))=(\xi\times \xi)^{*}({\mathcal L}(f)), 
\end{eqnarray*}
where $\Sigma$ is a suitable generator of $H^{2}(C_{2}({\mathbb R}^{3}),\iota)$; see the following commutative diagram: 
\[\xymatrix{
& H^{2}(C_{2}(G),\iota)   & \\
& H^{2}(C_{2}(G),\iota) \ar[u]^{(\xi\times \xi)^{*}}_{\cong} & \ar[l]^{(f\times f)^{*}} \ar[ul]_{{~}\ {~}\ ((f\circ\xi)\times (f\circ\xi))^{*}} H^{2}(C_{2}({\mathbb R}^{3}),\iota) & \\
}\]
Since $H^{2}(C_{2}(G),\iota)\cong {\mathbb Z}$, we have the desired conclusion.
\end{proof}

\begin{proof}[Proof of Proposition \ref{conjugate1}.] 
Assume that there exist a spatial embedding $f$ of $G$, a self-homeomorphism $\varphi$ of ${\mathbb S}^{3}$ and an automorphism $\xi$ of $G$ such that $f\circ \sigma=\varphi\circ f$ and $\tau=\xi^{-1}\sigma \xi$. Then the following diagram is commutative:  
\[\xymatrix{
G \ar[d]_{\tau} \ar[r]^{\xi}  & 
G \ar[d]_{\sigma} \ar[r]^{f}  & 
{\mathbb S}^{3} \ar[d]^{\varphi} & \\
G \ar[r]_{\xi} & 
G \ar[r]_{f} &
{\mathbb S}^{3} & 
}
\]
Hence, $f\circ \xi$ is $\tau$-symmetric. Thus, by Lemma \ref{auto_simon}, ${\mathcal L}(f\circ \xi)=\pm {\mathcal L}(f)=\pm n$. If ${\mathcal L}(f\circ \xi)=n$, then $(\tau,n)$ is realizable. If ${\mathcal L}(f\circ \xi)=-n$, then $(\tau,n)$ is also realizable by Lemma \ref{mirror}. 
\end{proof}

\begin{proof}[Proof of Proposition \ref{conjugate2}.] 
(1) We may identify ${\rm Aut}(K_{5})$ with the symmetric group of degree $5$. It is not hard to see that all conjugacy classes in ${\rm Aut}(K_{5})$ are classified as follows: 

\noindent
(i) ${\rm id}$, 

\noindent
(ii) $(i\ j)$ for $\{i,j\}\subset \{1,2,3,4,5\}$, 

\noindent
(iii) $(i\ j\ k)$ for $\{i,j,k\}\subset \{1,2,3,4,5\}$, 

\noindent
(iv) $(i\ j\ k\ l)$ for $\{i,j,k,l\}\subset \{1,2,3,4,5\}$, 

\noindent
(v) $(i\ j\ k\ l\ m)$ for $\{i,j,k,l,m\}=\{1,2,3,4,5\}$, 

\noindent
(vi) $(i\ j)(k\ l)$ for $\{i,j\}\subset \{1,2,3,4,5\}$ and $\{k,l\}\subset \{1,2,3,4,5\}\setminus \{i,j\}$, 

\noindent
(vii) $(i\ j)(k\ l\ m)$ for $\{i,j\}\subset \{1,2,3,4,5\}$ and $\{k,l,m\}=\{1,2,3,4,5\}\setminus \{i,j\}$. 

\noindent
Hence we have the desired representatives. We omit the details. 

\noindent
(2) In a similar way, we may identify ${\rm Aut}(K_{3,3})$ with a subgroup of ${\mathfrak S}_{6}$ of order $10$ (Note that ${\rm Aut}(K_{3,3})$ has a structure of the {\it wreath product} ${\mathfrak S}_{2}[{\mathfrak S}_{3}]$). It is not hard to see that all conjugacy classes in ${\rm Aut}(K_{3,3})$ are classified as follows: 

\noindent
(i) ${\rm id}$, 

\noindent
(ii) $(i\ j)$ for $\{i,j\}\subset\{1,2,3\}$ or $\{i,j\}\subset\{4,5,6\}$,

\noindent
(iii) $(i\ j\ k)$ for $\{i,j,k\}=\{1,2,3\}$ or $\{4,5,6\}$, 

\noindent
(iv) $(i\ l\ j\ m\ k\ n)$ for $\{i,j,k\}=\{1,2,3\}$ and $\{l,m,n\}=\{4,5,6\}$, 

\noindent
(v) $(i\ j)(l\ m)$ for $\{i,j\}\subset \{1,2,3\}$ and $\{l,m\}\subset \{4,5,6\}$, 

\noindent
(vi) $(i\ j)(l\ m\ n)$ for $\{i,j\}\subset \{1,2,3\}$ and $\{l,m,n\}=\{4,5,6\}$, or $\{i,j\}\subset \{4,5,6\}$ and $\{l,m,n\}=\{1,2,3\}$, 

\noindent
(vii) $(i\ j\ k)(l\ m\ n)$ for $\{i,j,k\}=\{1,2,3\}$ and $\{l,m,n\}=\{4,5,6\}$,

\noindent
(viii) $(i\ l\ j\ m)(k\ n)$ for $\{i,j\}\subset \{1,2,3\}$, $\{l,m\}\subset \{4,5,6\}$ and $k\in \{1,2,3\}\setminus \{i,j\}$, $n\in \{4,5,6\}\setminus \{l,m\}$,

\noindent
(ix) $(i\ l)(j\ m)(k\ n)$ for $\{i,j,k\}=\{1,2,3\}$ and $\{l,m,n\}=\{4,5,6\}$. 

\noindent
Hence we have the desired representatives. We also omit the details. 
\end{proof}

\begin{Remark}\label{wu_sign}
{\rm 
We can decide the sign of ${\mathcal L}(f)$ in Lemma \ref{auto_simon} as follows. Let $D_{2}(G)$ be the union of $s\times t$ where $(s,t)$ varies over all pairs of disjoint edges of $G$. It is known that $D_{2}(G)$ is homotopy equivalent to $C_{2}(G)$, equivariantly with respect to the action $\iota$ \cite[Proposition 1.4]{taniyama95}. Moreover, $D_{2}(K_{5})$ and $D_{2}(K_{3,3})$ are homeomorphic to the closed connected orientable surfaces of genus $6$ and $4$, respectively \cite{sarkaria91}. Then, for an automorphism $\sigma$ of $G$, ${\mathcal L}(f\circ \sigma)={\mathcal L}(f)$ (resp. ${\mathcal L}(f\circ \sigma)=-{\mathcal L}(f)$) if and only if $(\sigma\times \sigma)|_{D_{2}(G)}$ is orientation-preserving (resp. orientation-reversing). Note that if two automorphisms $\sigma$ and $\tau$ are conjugate and $(\sigma\times\sigma)|_{D_{2}(G)}$ is orientation-preserving (resp. orientation-reversing), then $(\tau\times\tau)|_{D_{2}(G)}$ is also orientation-preserving (resp. orientation-reversing). As a consequence, $(\sigma\times \sigma)|_{D_{2}(K_{5})}$ is orientation-reversing if and only if $\sigma$ is conjugate to $(1~2~3~4)$, $(1~2)$ or $(1~2)(3~4~5)$, and $(\sigma\times\sigma)|_{D_{2}(K_{3,3})}$ is orientation-reversing if and only if $\sigma$ is conjugate to $(1~4~2~5)(3~6)$, $(1~2)$ or $(1~2)(4~5~6)$ by a direct observation how the oriented 2-cells of $D_{2}(G)$ are mapped (or we may check that by a direct calculation of the Simon invariants under two different numberings of the vertices that differ by an automorphism of $G$). In only these cases, it may happen that ${\mathcal L}(f\circ \sigma)=-{\mathcal L}(f)$.
} 
\end{Remark}

\section{Proofs of main theorems} 

For the proofs of Theorems \ref{K5} and \ref{K33}, we need two results which have been proved as consequences of $3$-manifold topology. The following is a direct corollary of \cite[Theorem 2]{soma98}. 

\begin{Theorem}\label{soma}  
Let $G=K_{5}$ or $K_{3,3}$, $\sigma$ an automorphism of $G$, and $f$ a $\sigma$-symmetric spatial embedding of $G$. Then there exist a spatial embedding $g$ of $G$ which is homologous to $f$ and a periodic self-homeomorphism $\varphi$ of ${\mathbb S}^{3}$ such that $g\circ\sigma=\varphi\circ g$. 
\end{Theorem}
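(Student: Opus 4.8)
The plan is to deduce Theorem~\ref{soma} from \cite[Theorem 2]{soma98} once the notion of a $\sigma$-symmetric spatial embedding has been translated into a statement about a periodic map on (a modification of) the spatial graph. First I would fix notation: let $k$ be the order of $\sigma$ in ${\rm Aut}(G)$ and let $\varphi$ be a self-homeomorphism of ${\mathbb S}^{3}$ with $f\circ\sigma=\varphi\circ f$. Iterating this relation $k$ times and using $\sigma^{k}={\rm id}$ yields $f=\varphi^{k}\circ f$, so $\varphi^{k}$ fixes $f(G)$ pointwise; in particular $\varphi(f(G))=f(G)$. Choosing a $\varphi$-invariant regular neighborhood $N$ of $f(G)$ --- possible since $f(G)$ is compact and $\varphi$-invariant --- the map $\varphi$ restricts to a self-homeomorphism of the exterior $E={\mathbb S}^{3}\setminus{\rm int}\,N$. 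Since $G$ is connected, $E$ is compact, orientable and irreducible, and $\partial E$ is a closed orientable surface of positive genus ($6$ when $G=K_{5}$, $4$ when $G=K_{3,3}$); in particular $E$ is Haken. Thus $\varphi$ determines a finite ``combinatorial'' symmetry of $f(G)$ --- the one induced by $\sigma$ --- together with a self-homeomorphism of the Haken manifold $E$ whose $k$-th power need not be isotopic to the identity even though it restricts to the identity on $f(G)$.

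Next I would invoke \cite[Theorem 2]{soma98}, which in the present language asserts that data of this kind can be straightened after a homology: there exist a spatial embedding $g$ of $G$ homologous to $f$ and a \emph{periodic} self-homeomorphism $\varphi'$ of ${\mathbb S}^{3}$ preserving $g(G)$ and inducing on it the same combinatorial automorphism that $\varphi$ induces on $f(G)$, namely $\sigma$; in other words $g\circ\sigma=\varphi'\circ g$, which is precisely the assertion. The one thing to check in this translation is that the passage from $f$ to the homologous $g$ respects the identification of the underlying graph with $G$ used to make sense of $\sigma$ on both sides. This is harmless: a homology is generated by ambient isotopies and Delta moves (\cite[Theorem 1.3]{motohashi-taniyama97}), and a Delta move may be carried out simultaneously on a complete $\sigma$-orbit of local disks without changing the homology class, so the modified embedding is again $\sigma$-symmetric and Soma's straightening is performed compatibly with such equivariant modifications.

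The real content of the statement is interior to \cite[Theorem 2]{soma98}, and that is the step I expect to be the main obstacle, although here one is entitled to use it as a black box. A priori $\varphi$ may generate an infinite cyclic subgroup of the mapping class group of the pair $({\mathbb S}^{3},f(G))$ --- equivalently of $E$, keeping track of the meridians of $N$ --- because $\varphi^{k}$, while trivial on $f(G)$, can act on $N$ as a nontrivial product of meridional twists; thus ``$\sigma$-symmetric'' is genuinely weaker than ``rigidly $\sigma$-symmetric''. Promoting the induced finite action to an honest periodic homeomorphism, and realizing it on all of ${\mathbb S}^{3}$ rather than only on $E$, requires $3$-manifold geometry: the geometrization of Haken $3$-manifolds, the equivariant torus decomposition, Mostow--Prasad rigidity, and (on ${\mathbb S}^{3}$ itself) the circle of ideas surrounding the solution of the Smith conjecture. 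It is precisely to make room for this straightening that $f$ must be allowed to move within its homology class. Once $\varphi'$ has been produced nothing further is needed; and since the Simon invariant is a homological invariant we have ${\mathcal L}(g)={\mathcal L}(f)$, so $g$ is a rigidly $\sigma$-symmetric spatial embedding with the same Simon invariant as $f$.
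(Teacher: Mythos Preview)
Your approach is the same as the paper's: the paper does not prove Theorem~\ref{soma} at all beyond the sentence ``The following is a direct corollary of \cite[Theorem 2]{soma98}'', and you likewise treat Soma's result as a black box while supplying some surrounding context. Your elaboration on why the exterior is Haken, why homology (rather than ambient isotopy) is the natural flexibility, and what geometric machinery lies behind \cite{soma98} is reasonable commentary, though none of it is required here. One small quibble: the existence of a $\varphi$-invariant regular neighborhood of $f(G)$ is not automatic for a non-periodic $\varphi$, so that step of your setup is not quite justified as stated; but since the actual argument is simply to quote \cite[Theorem 2]{soma98}, this does not affect the conclusion.
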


The following is well known as Smith's theorem. 

\begin{Theorem}\label{smith} {\rm (\cite{smith39})} 
Let $\varphi$ be an orientation-reversing periodic self-homeomorphism of ${\mathbb S}^{3}$. Then the fixed point set ${\rm Fix}(\varphi)$ is homeomorphic to ${\mathbb S}^{0}$ or ${\mathbb S}^{2}$. 
\end{Theorem}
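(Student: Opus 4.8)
The plan is to determine ${\rm Fix}(\varphi)$ by combining two classical ingredients: the local normal structure of a periodic piecewise linear map near a fixed point, which constrains the \emph{dimension} of the fixed point set, and Smith theory, which constrains its $\mathbb{Z}/2$\nobreakdash-homology. Write $F={\rm Fix}(\varphi)$ and let $n$ be the order of $\varphi$. Since an orientation-reversing homeomorphism has degree $-1$ whereas $\varphi^{n}={\rm id}$ has degree $+1$, the integer $n$ is even; in particular $\varphi\neq{\rm id}$. By the Lefschetz fixed point theorem, $\Lambda(\varphi)=1-(-1)=2\neq 0$, so $F\neq\emptyset$.

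First I would pin down the dimension of $F$. Since $\varphi$ is a periodic $\mathrm{PL}$ homeomorphism of ${\mathbb S}^{3}$, its fixed point set $F$ is a closed $\mathrm{PL}$ submanifold and, near each $x\in F$, $\varphi$ is modeled by an orthogonal action on ${\mathbb R}^{3}$ fixing exactly $T_{x}F$, so the induced action on the normal fibre ${\mathbb R}^{3-\dim F}$ is fixed-point-free. Because $\varphi$ acts as the identity on $TF$ but reverses the orientation of ${\mathbb S}^{3}$, it reverses the orientation of each normal fibre, hence $3-\dim F$ is odd; as $\dim F=3$ is excluded by $\varphi\neq{\rm id}$, this forces $\dim F\in\{0,2\}$. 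Moreover, at an isolated fixed point the local Lefschetz index of an orientation-reversing orthogonal map of ${\mathbb R}^{3}$ is easily computed to be $+1$ (it is $\det(I-A)$ up to sign, positive for such $A$). Therefore, if $\dim F=0$ then $\#F=\Lambda(\varphi)=2$, i.e. $F\cong{\mathbb S}^{0}$, and we are done in that case.

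It remains to treat $\dim F=2$. Write $n=2^{a}m$ with $m$ odd, so $a\geq 1$, and set $\psi=\varphi^{m}$, an orientation-reversing homeomorphism of order $2^{a}$. Running $\langle\psi\rangle\cong\mathbb{Z}/2^{a}$ through its (unique) chain of index-$2$ subgroups and applying Smith theory to each successive $\mathbb{Z}/2$-action, which sends a $\mathbb{Z}/2$-homology sphere to a $\mathbb{Z}/2$-homology sphere, shows that ${\rm Fix}(\psi)$ is a $\mathbb{Z}/2$-homology sphere; by the dimension argument of the previous paragraph applied to $\psi$ it is a closed manifold of dimension $0$ or $2$, and since a closed surface with the $\mathbb{Z}/2$-homology of $\mathbb{S}^{2}$ must be $\mathbb{S}^{2}$ (every other closed surface has $H_{1}(\,\cdot\,;\mathbb{Z}/2)\neq 0$), we get ${\rm Fix}(\psi)\cong{\mathbb S}^{0}$ or ${\mathbb S}^{2}$. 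Now $\mathbb{Z}/m=\langle\varphi\rangle/\langle\psi\rangle$ acts on ${\rm Fix}(\psi)$ with fixed point set $F$; since $m$ is odd the induced homeomorphism of ${\rm Fix}(\psi)$ has odd order, hence is orientation-preserving, so on $\mathbb{S}^{2}$ it is either trivial or has exactly two fixed points (being conjugate to a rotation). In the case $\dim F=2$ the only possibility compatible with $F$ being $2$-dimensional is that ${\rm Fix}(\psi)={\mathbb S}^{2}$ and the $\mathbb{Z}/m$-action on it is trivial, whence $F\cong{\mathbb S}^{2}$. Combining the two cases, ${\rm Fix}(\varphi)\cong{\mathbb S}^{0}$ or ${\mathbb S}^{2}$.

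The main obstacle is not the combinatorics above but the inputs I am treating as standard: that a periodic $\mathrm{PL}$ homeomorphism of ${\mathbb S}^{3}$ has a $\mathrm{PL}$-submanifold fixed point set with linear normal behaviour, and the Smith inequality itself. Both are classical---the latter being exactly the content of Smith's work---so in a self-contained account one would either cite them or, following Smith, argue directly in $\mathbb{Z}/2$-cohomology and then separately identify the resulting mod-$2$ homology sphere with ${\mathbb S}^{0}$ or ${\mathbb S}^{2}$ using that it sits inside ${\mathbb S}^{3}$ as a low-dimensional subcomplex. The one step that genuinely uses the orientation-reversing hypothesis is the normal-orientation bookkeeping that kills $\dim F=1$, so that is the part I would write out most carefully.
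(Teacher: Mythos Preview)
The paper does not prove this statement at all: Theorem~\ref{smith} is simply quoted as Smith's classical result \cite{smith39} and then used as a black box in the proofs of Theorems~\ref{K5}(5) and~\ref{K33}(7). So there is no ``paper's own proof'' to compare against; your write-up is an independent sketch of why the cited theorem is true.

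As a sketch it is essentially correct and follows the standard route: Lefschetz to get $F\neq\emptyset$, local linearity of a periodic PL action to force $\dim F\in\{0,2\}$ via the normal-orientation bookkeeping, iterated Smith theory along the $2$-Sylow chain to see that ${\rm Fix}(\psi)$ is a $\mathbb{Z}/2$-homology sphere, and then the residual odd-order action on ${\rm Fix}(\psi)\cong\mathbb{S}^{0}$ or $\mathbb{S}^{2}$ to identify $F$. Two small points worth tightening if you ever write this out in full: (i)~your dichotomy ``$\dim F=0$'' versus ``$\dim F=2$'' tacitly assumes $F$ has pure dimension, which is not immediate from the local analysis alone; the cleanest fix is to note that $F\subset{\rm Fix}(\psi)$ and argue from the structure of ${\rm Fix}(\psi)$ in both cases, which your second paragraph essentially does; (ii)~the assertion that an orientation-preserving periodic homeomorphism of $\mathbb{S}^{2}$ is conjugate to a rotation is itself a nontrivial classical theorem (Brouwer--Ker\'ekj\'art\'o--Eilenberg), on a par with the other inputs you flag. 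None of this is a genuine gap, and you are right that the real content lies in the two ingredients you single out at the end.
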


In the following proofs we denote the image of the vertex with label $i$ under the spatial embedding also by $i$, so long as no confusion arises. 

\begin{proof}[Proof of Theorem \ref{K5}.] 
(3) and (4) Let $f_{m,\pm}$ be the spatial embeddings of $K_5$ as in Fig. \ref{4.1}. By a calculation we have ${\mathcal L}(f_{m,\pm})=4m \pm 1$, which may be any odd number for a suitable choice of $m$ and $\pm 1$. A $\pi/2$ rotation around the axis through the vertex $5$, the middle point of the edge $\overline{1\ 3}$ and the middle point of the edge $\overline{2\ 4}$ maps $f_{m,\pm}(K_5)$ onto its mirror image. Therefore $f_{m,\pm}$ is $(1\ 2\ 3\ 4)$-symmetric. Thus $((1\ 2\ 3\ 4),n)$ is realizable for any odd integer $n$. A $\pi$ rotation maps $f_{m,\pm}(K_5)$ onto itself and hence $f_{m,\pm}$ is $(1\ 3)(2\ 4)$-symmetric. Therefore $((1\ 3)(2\ 4),n)$ is realizable for any odd integer $n$. Since $(1\ 2)(3\ 4)$ is conjugate to $(1\ 3)(2\ 4)$, Proposition \ref{conjugate1} implies $((1\ 2)(3\ 4),n)$ is realizable for any odd integer $n$. 

\begin{figure}[htbp]
      \begin{center}
\scalebox{0.425}{\includegraphics*{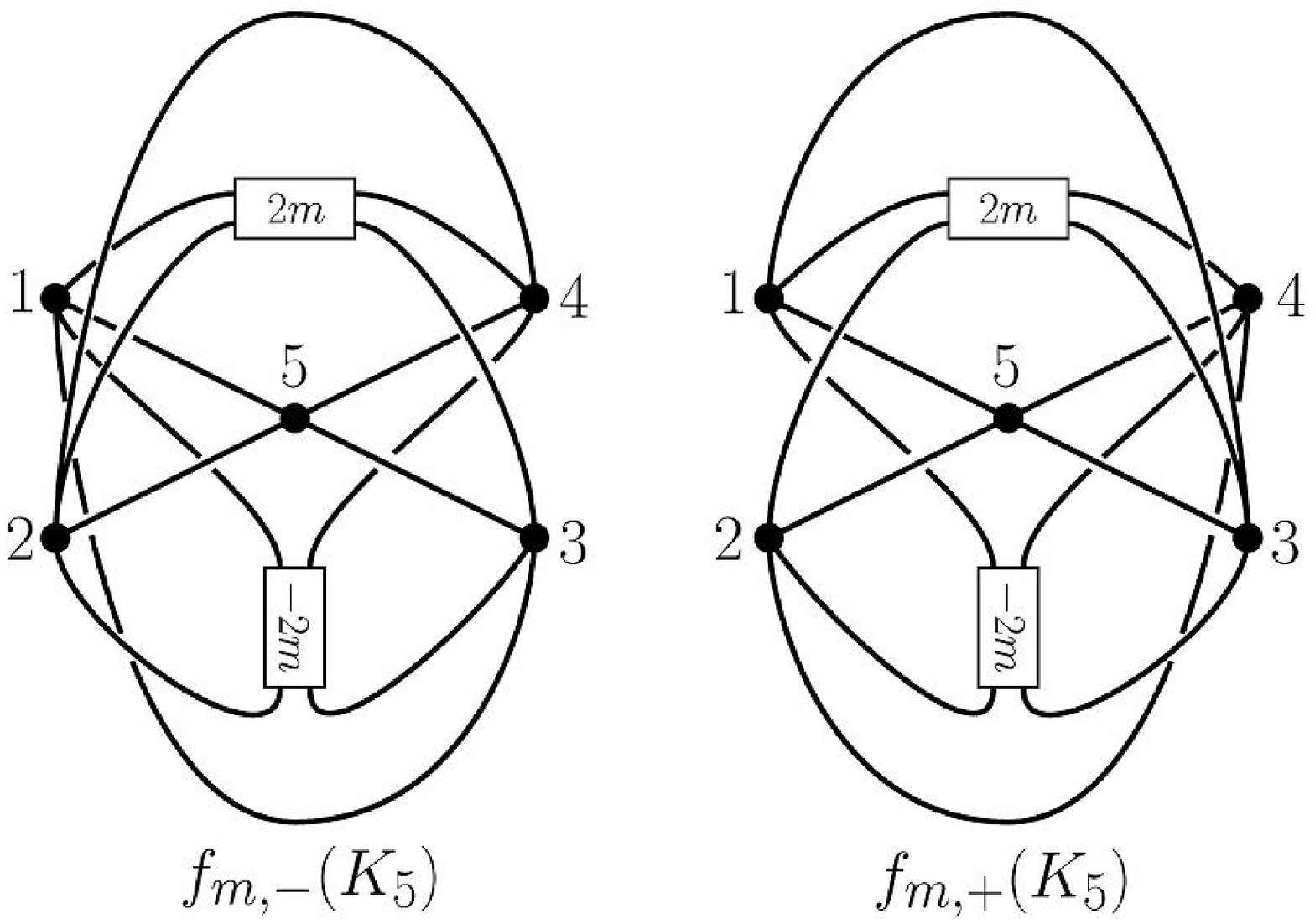}}
      \end{center}
   \caption{}
  \label{4.1}
\end{figure} 

\noindent
(2) Let $f_{m}$ be the spatial embedding of $K_5$ as in Fig. \ref{4.2}. Note that ${\mathcal L}(f_{m})=2m+1$, which may be any odd number. Observe that $f_{m}(K_5)$ is contained in an unknotted M\"{o}bius strip in ${\mathbb S}^{3}$ containing $m$ full twists. Then by considering a suitable ambient isotopy of ${\mathbb S}^{3}$ that preserves the M\"{o}bius strip setwise we conclude that $f_{m}$ is $(1\ 2\ 3\ 4\ 5)$-symmetric.

\begin{figure}[htbp]
      \begin{center}
\scalebox{0.3}{\includegraphics*{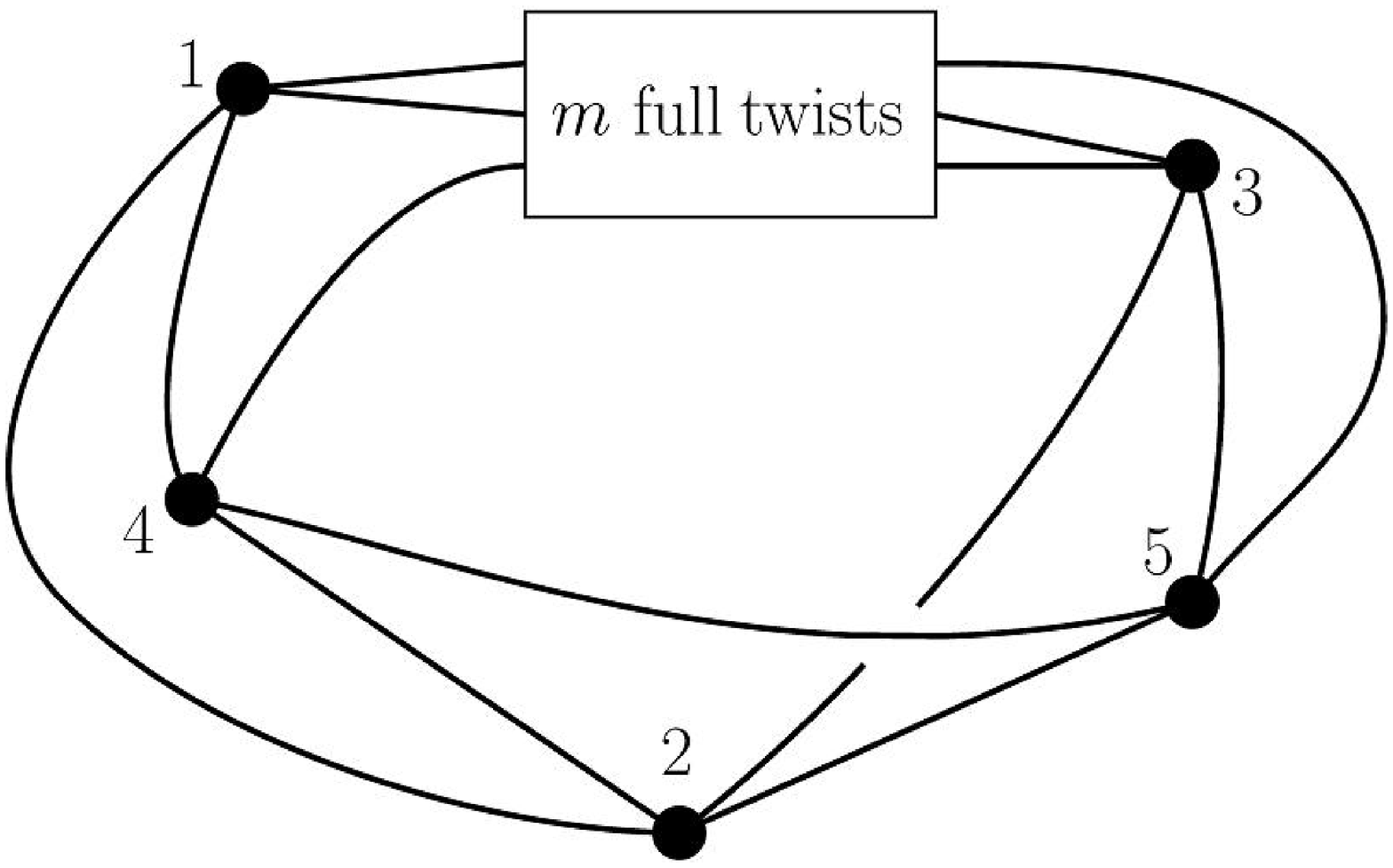}}
      \end{center}
   \caption{}
  \label{4.2}
\end{figure} 

\noindent
(1) (``if'' part) Let $f_{m,\pm}$ be the spatial embedding of $K_5$ as in Fig. \ref{4.3}. Note that ${\mathcal L}(f_{m,\pm})=6m\pm1$. A $2\pi/3$ rotation around the axis through the vertices $4$ and $5$ maps $f_{m,\pm}(K_5)$ onto itself and so $f_{m,\pm}$ is $(1\ 2\ 3)$-symmetric.

\begin{figure}[htbp]
      \begin{center}
\scalebox{0.45}{\includegraphics*{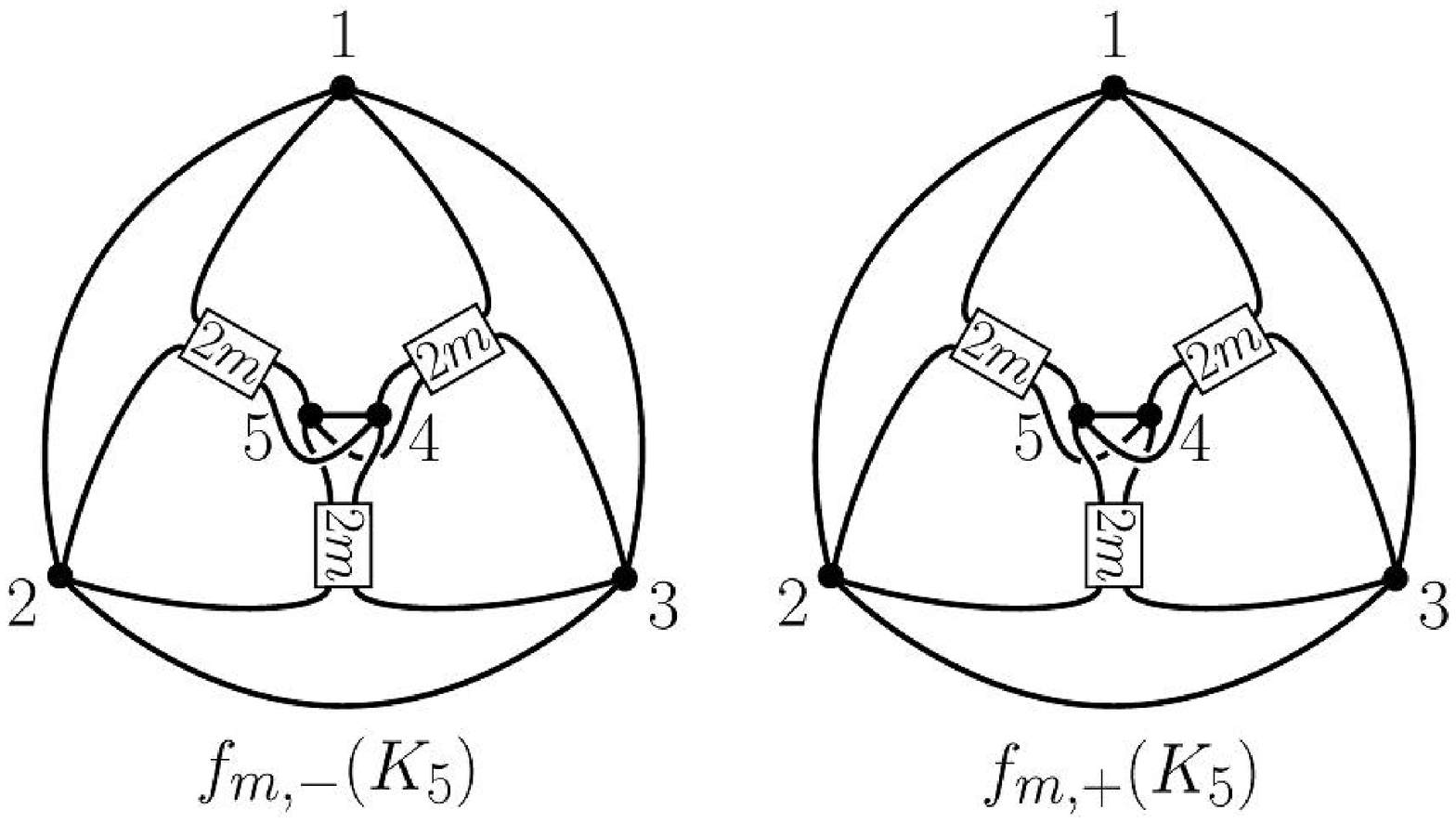}}
      \end{center}
   \caption{}
  \label{4.3}
\end{figure} 

\noindent
(5) (``if'' part) and (6) (``if'' part) Let $f_{\pm}$ be the spatial embedding of $K_5$ as in Fig. \ref{4.4}. Note that ${\mathcal L}(f_{\pm})=\pm 1$. By considering the reflection of ${\mathbb S}^{3}$ with respect to the $2$-sphere containing the cycle $[3\ 4\ 5]$, we see that $f_{\pm}$ is $(1\ 2)$-symmetric. Then by composing this reflection and a $2\pi/3$ rotation around the axis through the vertices $1$ and $2$, we find that $f_{\pm}$ is $(1\ 2)(3\ 4\ 5)$-symmetric.

\begin{figure}[htbp]
      \begin{center}
\scalebox{0.325}{\includegraphics*{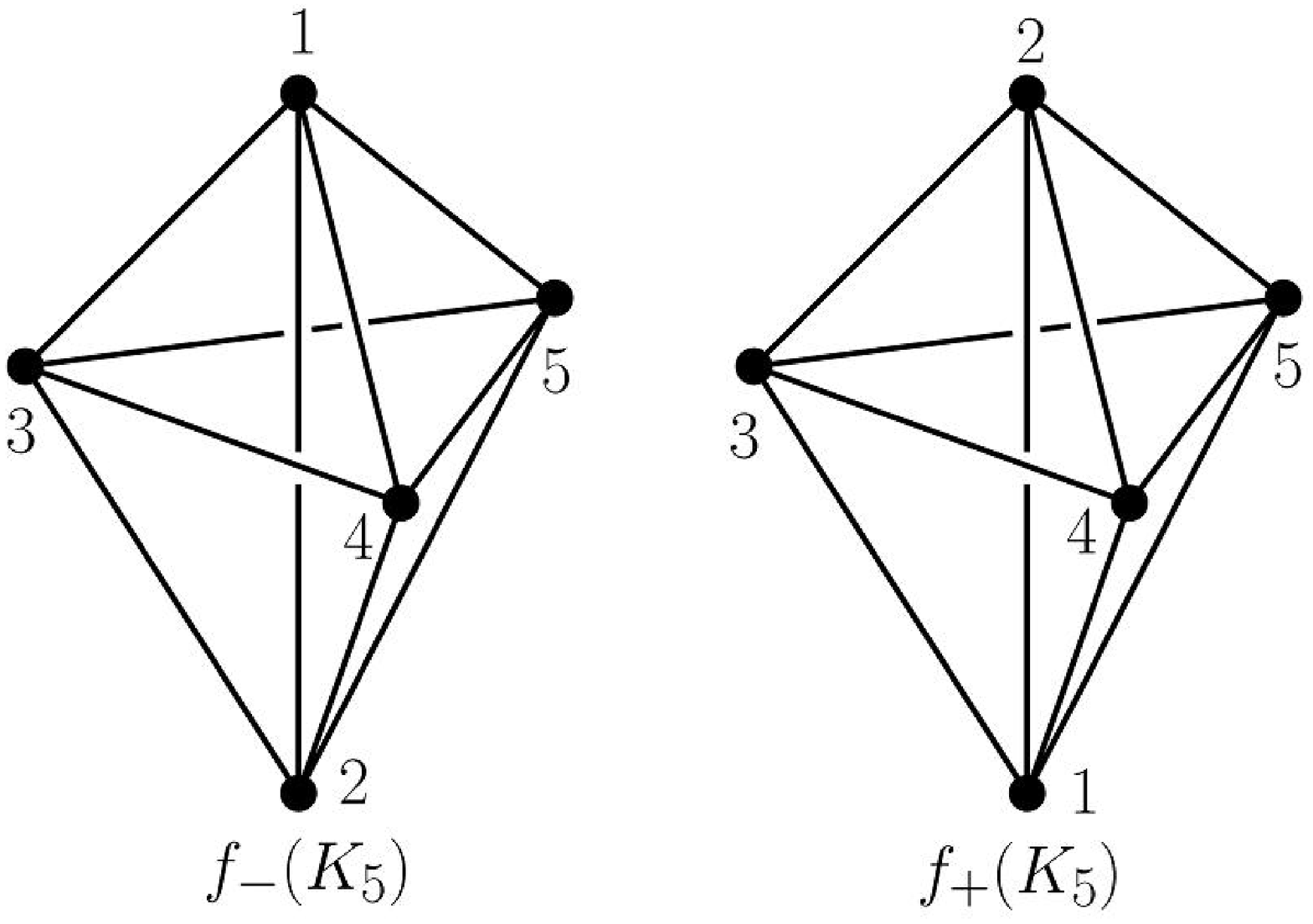}}
      \end{center}
   \caption{}
  \label{4.4}
\end{figure} 

\noindent
(1) (``only if'' part) Suppose that $f$ is a $(1\ 2\ 3)$-symmetric spatial embedding of $K_{5}$. We will show that $\alpha_{\omega}(f)$ is a multiple of $3$. Then by Lemma \ref{mod6} we have the result. There exist twelve $5$-cycles and fifteen $4$-cycles in $K_5$. By the permutation $(1\ 2\ 3)$ they are divided into the following nine orbits: 
\begin{eqnarray*}
& [1\ 2\ 3\ 4\ 5]\mapsto[2\ 3\ 1\ 4\ 5] \mapsto [3\ 1\ 2\ 4\ 5]\mapsto[1\ 2\ 3\ 4\ 5] &\\
& [1\ 2\ 3\ 5\ 4]\mapsto[2\ 3\ 1\ 5\ 4] \mapsto [3\ 1\ 2\ 5\ 4]\mapsto[1\ 2\ 3\ 5\ 4] &\\
& [1\ 4\ 2\ 5\ 3]\mapsto[2\ 4\ 3\ 5\ 1]\mapsto[3\ 4\ 1\ 5\ 2]\mapsto[1\ 4\ 2\ 5\ 3] & \\
& [1\ 5\ 2\ 4\ 3]\mapsto[2\ 5\ 3\ 4\ 1]\mapsto[3\ 5\ 1\ 4\ 2]\mapsto[1\ 5\ 2\ 4\ 3] & \\
& [1\ 2\ 3\ 4]\mapsto[2\ 3\ 1\ 4] \mapsto [3\ 1\ 2\ 4]\mapsto[1\ 2\ 3\ 4] &\\
& [1\ 2\ 3\ 5]\mapsto[2\ 3\ 1\ 5] \mapsto [3\ 1\ 2\ 5]\mapsto[1\ 2\ 3\ 5] &\\
& [1\ 2\ 4\ 5]\mapsto[2\ 3\ 4\ 5] \mapsto [3\ 1\ 4\ 5]\mapsto[1\ 2\ 4\ 5] &\\
& [1\ 2\ 5\ 4]\mapsto[2\ 3\ 5\ 4] \mapsto [3\ 1\ 5\ 4]\mapsto[1\ 2\ 5\ 4] &\\
& [1\ 4\ 2\ 5]\mapsto[2\ 4\ 3\ 5] \mapsto [3\ 4\ 1\ 5]\mapsto[1\ 4\ 2\ 5] &
\end{eqnarray*}
Note that each orbit contains exactly three cycles and these cycles are mapped onto the same knot under $f$. Therefore they have the same $a_2$ and we have the desired conclusion.

\noindent
(5) (``only if'' part) Suppose that $f$ is a $(1\ 2)$-symmetric spatial embedding of $K_{5}$. Then by Theorem \ref{soma} there exist a $(1\ 2)$-symmetric spatial embedding $g$ of $K_{5}$ which is homologous to $f$ and a periodic self-homeomorphism $\varphi$ of ${\mathbb S}^{3}$ such that $g\circ(1\ 2)=\varphi\circ g$. As pointed out in Remark \ref{wu_sign}, it follows that ${\mathcal L}(g\circ(1\ 2))=-{\mathcal L}(g)$. Thus ${\mathcal L}(\varphi\circ g)={\mathcal L}(g\circ(1\ 2))=-{\mathcal L}(f)$, so $\varphi$ is orientation-reversing. Then by Theorem \ref{smith}, ${\rm Fix}(\varphi)$ is homeomorphic to ${\mathbb S}^{0}$ or ${\mathbb S}^{2}$. Since ${\rm Fix}(\varphi)$ contains at least three points $g(3)$, $g(4)$ and $g(5)$, it is homeomorphic to ${\mathbb S}^{2}$. Then ${\rm Fix}(\varphi)\cap g(K_5)$ is the union of the $3$-cycle $g([3\ 4\ 5])$ and the middle point of the edge $g(\overline{1\ 2})$. By considering the regular projection which is almost perpendicular to ${\rm Fix}(\varphi)$ and observing that the signs of crossings by the dotted lines in Fig. \ref{4.5} are not counted in the Simon invariant we deduce that ${\mathcal L}(g)=\pm 1$. Since $f$ and $g$ are homologous it follows that ${\mathcal L}(f)={\mathcal L}(g)=\pm 1$.

\begin{figure}[htbp]
      \begin{center}
\scalebox{0.5}{\includegraphics*{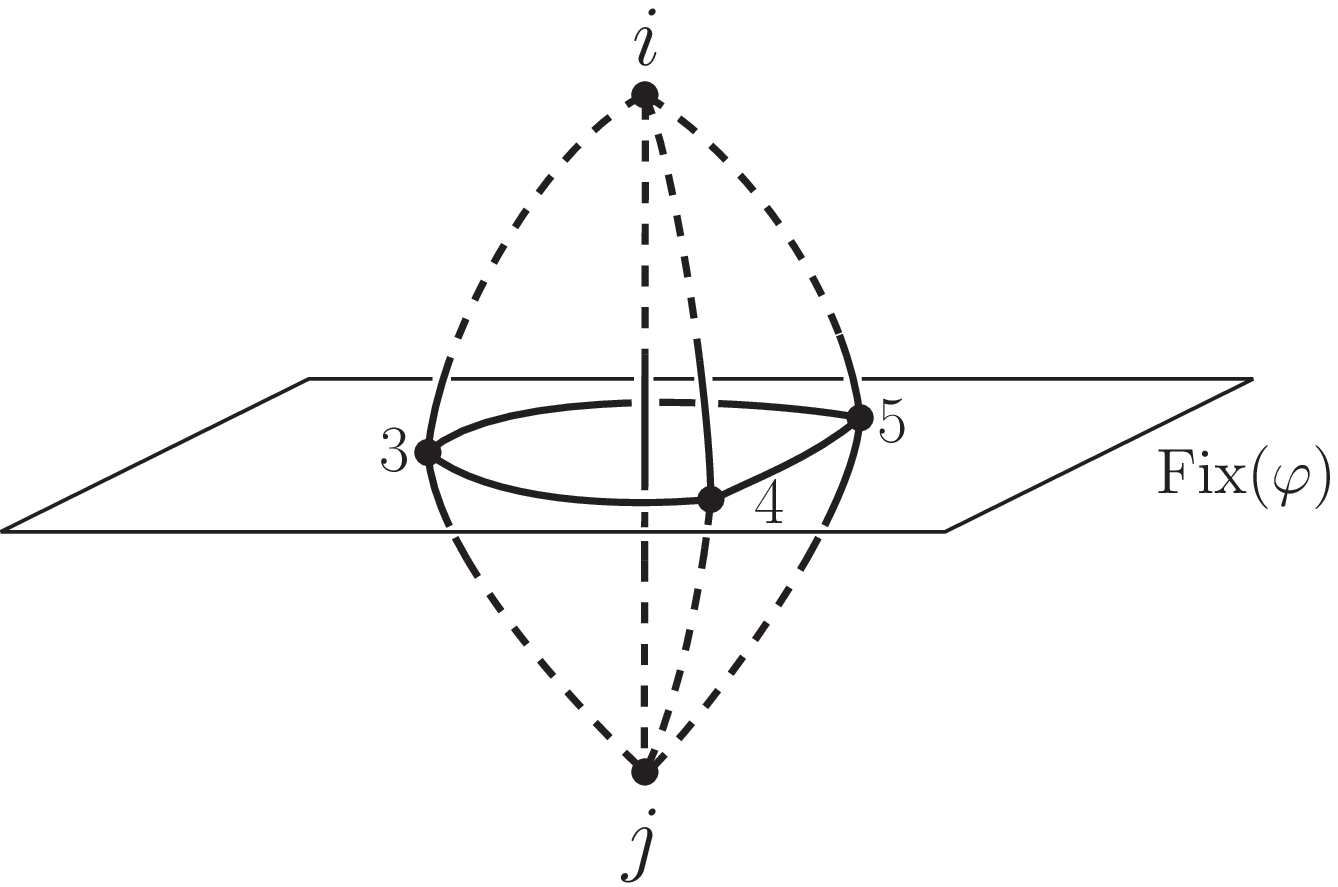}}
      \end{center}
   \caption{}
  \label{4.5}
\end{figure} 

\noindent
(6) (``only if'' part) Note that the cube of $(1\ 2)(3\ 4\ 5)$ equals $(1\ 2)$. Therefore $(1\ 2)(3\ 4\ 5)$-symmetric spatial embeddings of $K_{5}$ are $(1\ 2)$-symmetric. Then we have the result by (5) (``only if'' part). 
\end{proof}

\begin{proof}[Proof of Theorem \ref{K33}.] 
(2) and (6) Let $f_{m,\pm}$ be the spatial embeddings of $K_{3,3}$ as in Fig. \ref{4.6}. Note that ${\mathcal L}(f_{m,\pm})=4m\pm 1$, which may be any odd number. A $\pi/2$ rotation maps $f_{m,\pm}(K_{3,3})$ onto its mirror image. Therefore $f_{m,\pm}$ is $(1\ 4\ 2\ 5)(3\ 6)$-symmetric. Also, a $\pi$ rotation maps $f_{m,\pm}(K_{3,3})$ onto itself and hence $f_{m,\pm}$ is $(1\ 2)(4\ 5)$-symmetric. 

\begin{figure}[htbp]
      \begin{center}
\scalebox{0.425}{\includegraphics*{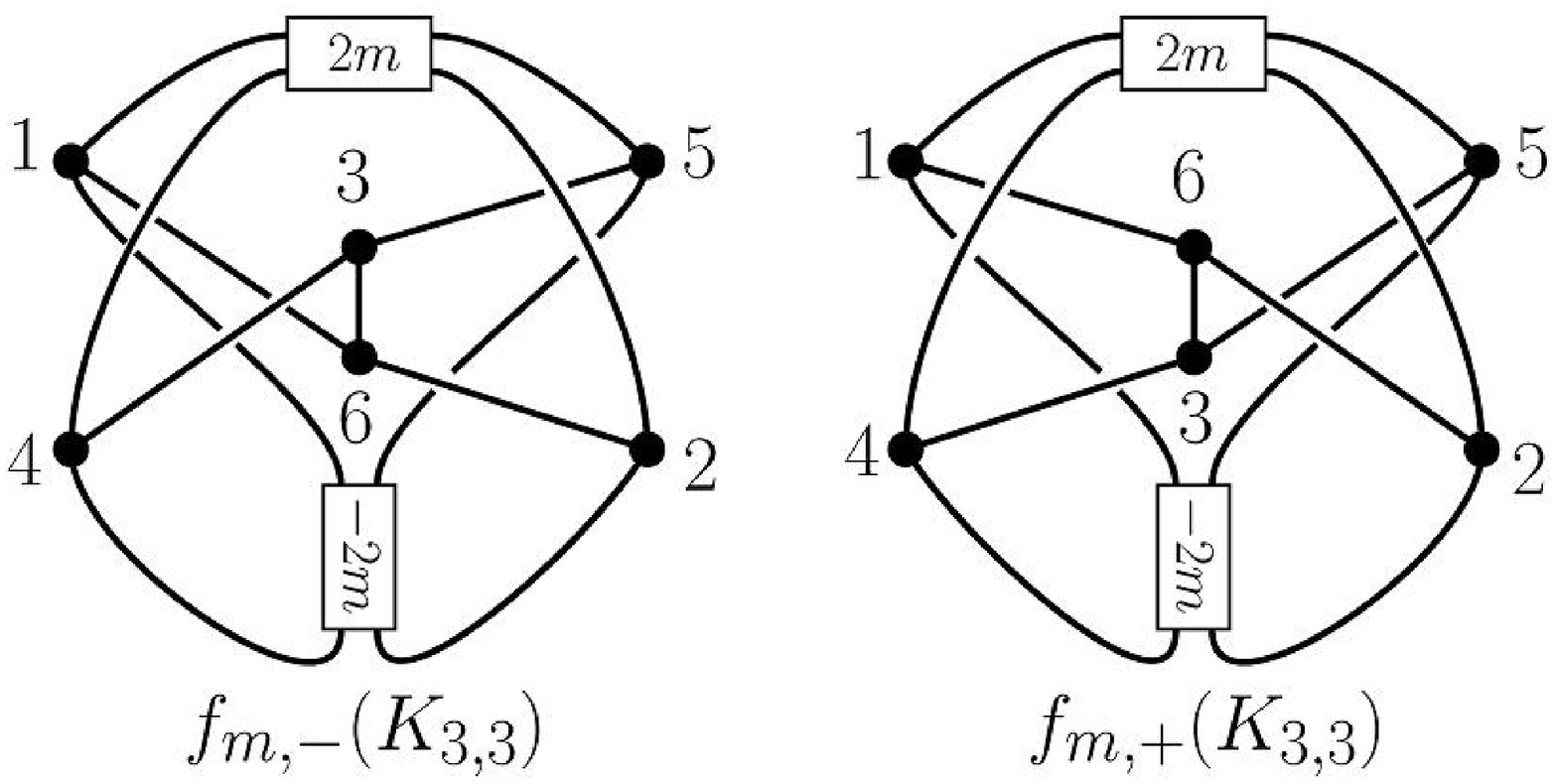}}
      \end{center}
   \caption{}
  \label{4.6}
\end{figure} 

\noindent
(3) (4) (5) Let $f_{m}$ be the spatial embedding of $K_{3,3}$ as in Fig. \ref{4.7}. Note that ${\mathcal L}(f_{m})=2m+1$. Observe that $f_{m}(K_{3,3})$ is contained in an unknotted M\"{o}bius strip in ${\mathbb S}^{3}$ containing $2m+1$ half twists. Then by considering a suitable ambient isotopy of ${\mathbb S}^{3}$ that preserves the M\"{o}bius strip setwise we see that $f_{m}$ is $(1\ 4\ 2\ 5\ 3\ 6)$-symmetric. Note that the square of $(1\ 4\ 2\ 5\ 3\ 6)$ is $(1\ 2\ 3)(4\ 5\ 6)$. Therefore $f_{m}$ is $(1\ 2\ 3)(4\ 5\ 6)$-symmetric. It is also easy to see that $f_{m}$ is $(1\ 5)(2\ 6)(3\ 4)$-symmetric. Since $(1\ 5)(2\ 6)(3\ 4)$ is conjugate to $(1\ 4)(2\ 5)(3\ 6)$ we have the result.

\begin{figure}[htbp]
      \begin{center}
\scalebox{0.25}{\includegraphics*{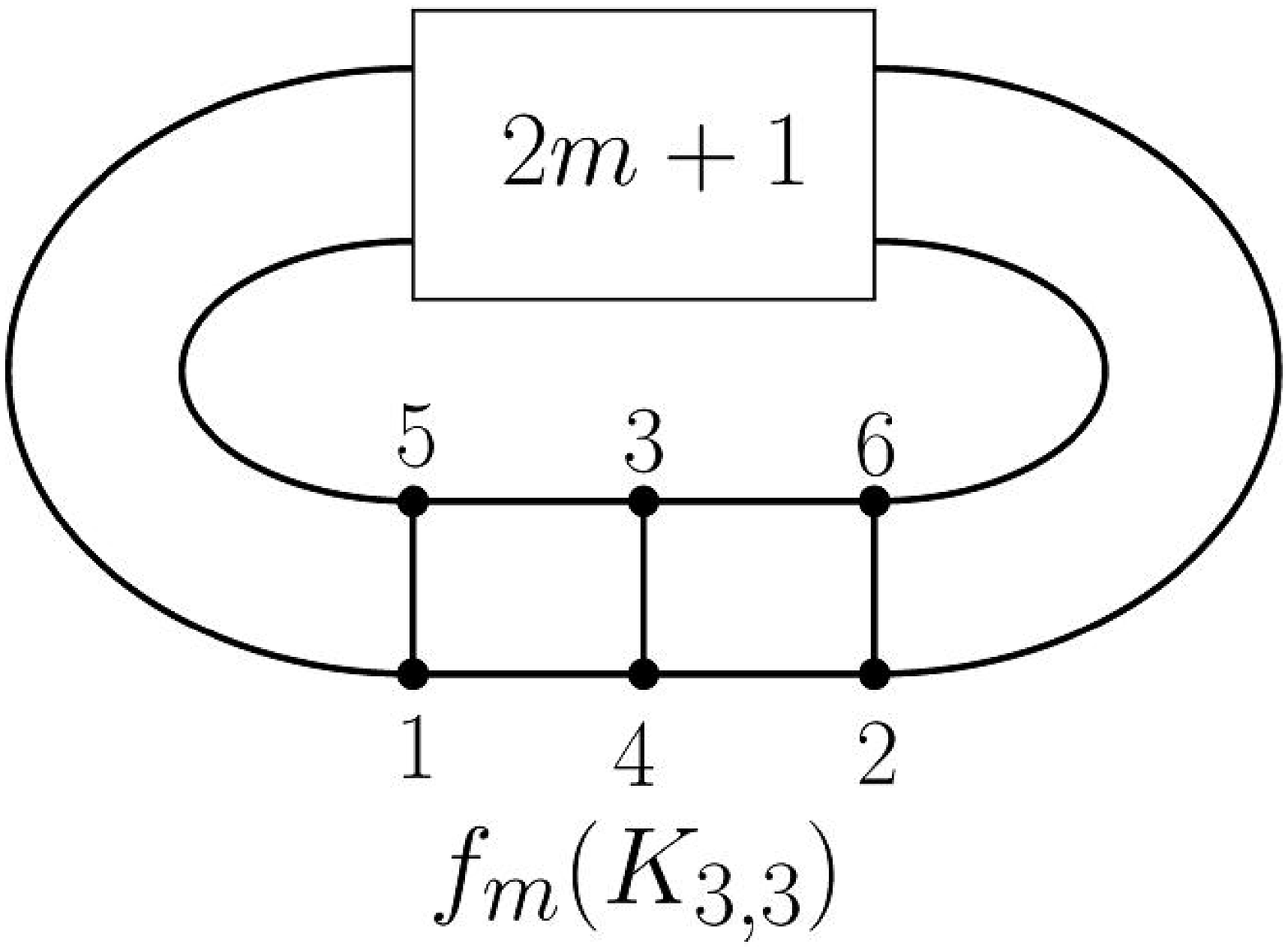}}
      \end{center}
   \caption{}
  \label{4.7}
\end{figure} 

\noindent
(1) (``if'' part) Let $f_{m,\pm}$ be the spatial embedding of $K_{3,3}$ as in Fig. \ref{4.8}. Note that ${\mathcal L}(f_{m,\pm})=6m\pm 1$. A $2\pi/3$ rotation around the axis through the vertices $5$, $4$ and $6$ maps $f_{m,\pm}(K_{3,3})$ onto itself, so $f_{m,\pm}$ is $(1\ 2\ 3)$-symmetric.

\begin{figure}[htbp]
      \begin{center}
\scalebox{0.4}{\includegraphics*{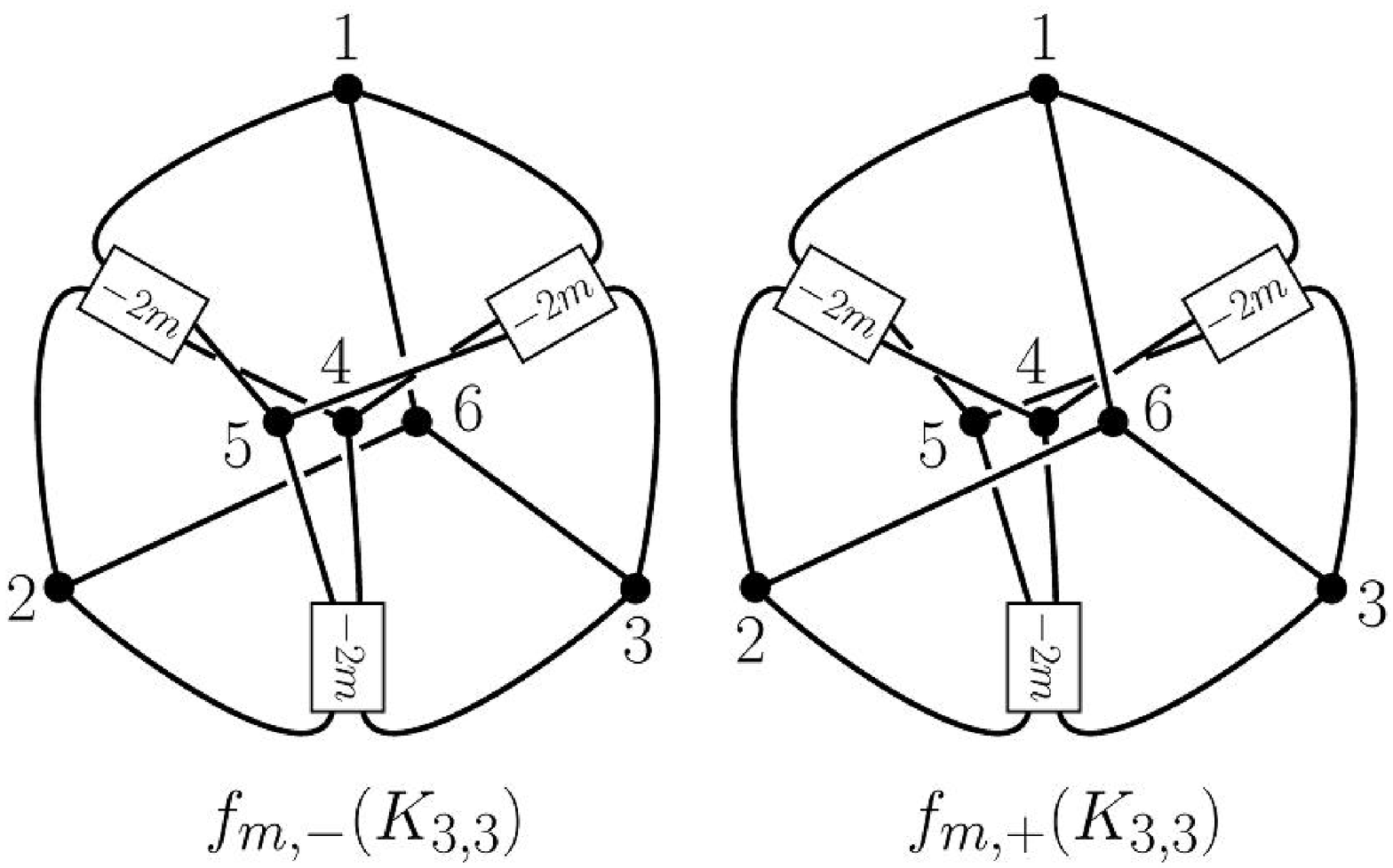}}
      \end{center}
   \caption{}
  \label{4.8}
\end{figure} 

\noindent
(7) (``if'' part) and (8) (``if'' part) Let $f_{\pm}$ be the spatial embedding of $K_{3,3}$ as in Fig. \ref{4.9}. Note that ${\mathcal L}(f_{\pm})=\pm 1$. By considering the reflection of ${\mathbb S}^{3}$ with respect to the $2$-sphere containing the vertices $3$, $4$, $5$ and $6$, we find that $f_{\pm}$ is $(1\ 2)$-symmetric. Then by composing this reflection and a $2\pi/3$ rotation around the axis through the vertices $1$, $2$ and $3$, we conclude that $f_{\pm}$ is $(1\ 2)(4\ 5\ 6)$-symmetric.

\begin{figure}[htbp]
      \begin{center}
\scalebox{0.325}{\includegraphics*{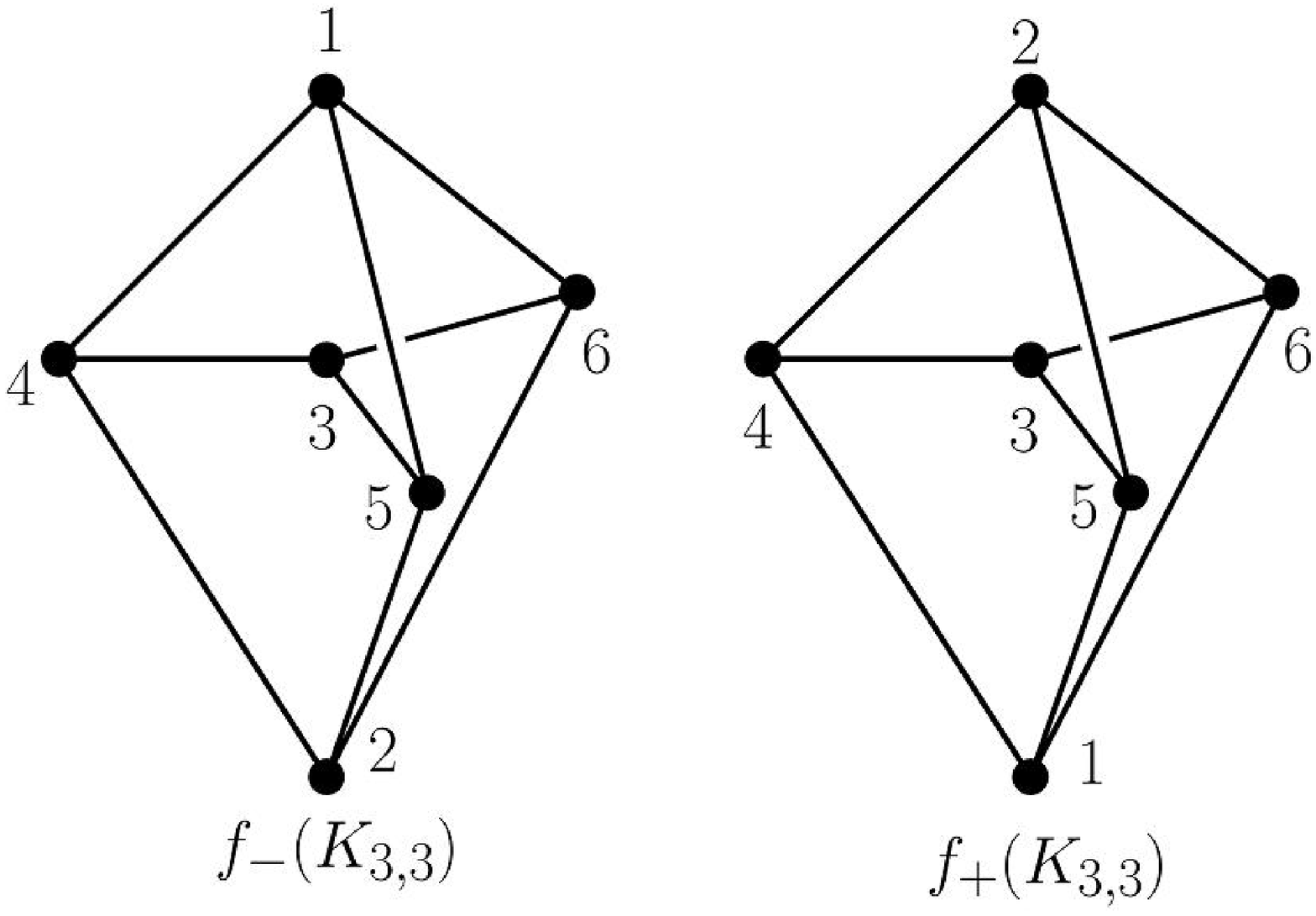}}
      \end{center}
   \caption{}
  \label{4.9}
\end{figure} 

\noindent
(1) (``only if'' part) Suppose that $f$ is a $(1\ 2\ 3)$-symmetric spatial embedding of $K_{3,3}$. We will show that $\alpha_{\omega}(f)$ is a multiple of $3$. Then by Lemma \ref{mod6} we have the result. There exist six $6$-cycles and nine $4$-cycles in $K_{3,3}$. By the permutation $(1\ 2\ 3)$ they are divided into the following five orbits: 

\begin{eqnarray*}
& [1\ 4\ 2\ 5\ 3\ 6]\mapsto[2\ 4\ 3\ 5\ 1\ 6]\mapsto[3\ 4\ 1\ 5\ 2\ 6]\mapsto[1\ 4\ 2\ 5\ 3\ 6] & \\
& [1\ 6\ 2\ 5\ 3\ 4]\mapsto[2\ 6\ 3\ 5\ 1\ 4]\mapsto[3\ 6\ 1\ 5\ 2\ 4]\mapsto[1\ 6\ 2\ 5\ 3\ 4] & \\
& [1\ 4\ 2\ 5]\mapsto[2\ 4\ 3\ 5]\mapsto[3\ 4\ 1\ 5]\mapsto[1\ 4\ 2\ 5] & \\
& [1\ 5\ 2\ 6]\mapsto[2\ 5\ 3\ 6]\mapsto[3\ 5\ 1\ 6]\mapsto[1\ 5\ 2\ 6] & \\
& [1\ 6\ 2\ 4]\mapsto[2\ 6\ 3\ 4]\mapsto[3\ 6\ 1\ 4]\mapsto[1\ 6\ 2\ 4] & 
\end{eqnarray*}
Note that each orbit contains exactly three cycles and they are mapped onto the same knot under $f$. Therefore they have the same $a_2$ and we have the desired conclusion.

\noindent
(7) (``only if'' part) Suppose that $f$ is a $(1\ 2)$-symmetric spatial embedding of $K_{3,3}$. Then by Theorem \ref{soma} there exist a $(1\ 2)$-symmetric spatial embedding $g$ of $K_{3,3}$ which is homologous to $f$ and a periodic self-homeomorphism $\varphi$ of ${\mathbb S}^{3}$ such that $g\circ(1\ 2)=\varphi\circ g$. By the same reason as in the proof of Theorem \ref{K5} (5) (``only if'' part), $\varphi$ is orientation-reversing. Then by Theorem \ref{smith}, ${\rm Fix}(\varphi)$ is homeomorphic to ${\mathbb S}^{0}$ or ${\mathbb S}^{2}$. Since ${\rm Fix}(\varphi)$ contains at least four points $g(3)$, $g(4)$, $g(5)$ and $g(6)$, it is homeomorphic to ${\mathbb S}^{2}$. Then ${\rm Fix}(\varphi)\cap g(K_{3,3})$ is the induced subgraph of the vertices $g(3)$, $g(4)$, $g(5)$ and $g(6)$. By considering the regular projection which is almost perpendicular to ${\rm Fix}(\varphi)$ and by the same reason as in the proof of Theorem \ref{K5} (5) (``only if'' part), we deduce that ${\mathcal L}(g)=\pm 1$ (see Fig. \ref{4.10}). Since $f$ and $g$ are homologous it follows that ${\mathcal L}(f)={\mathcal L}(g)=\pm 1$.

\begin{figure}[htbp]
      \begin{center}
\scalebox{0.5}{\includegraphics*{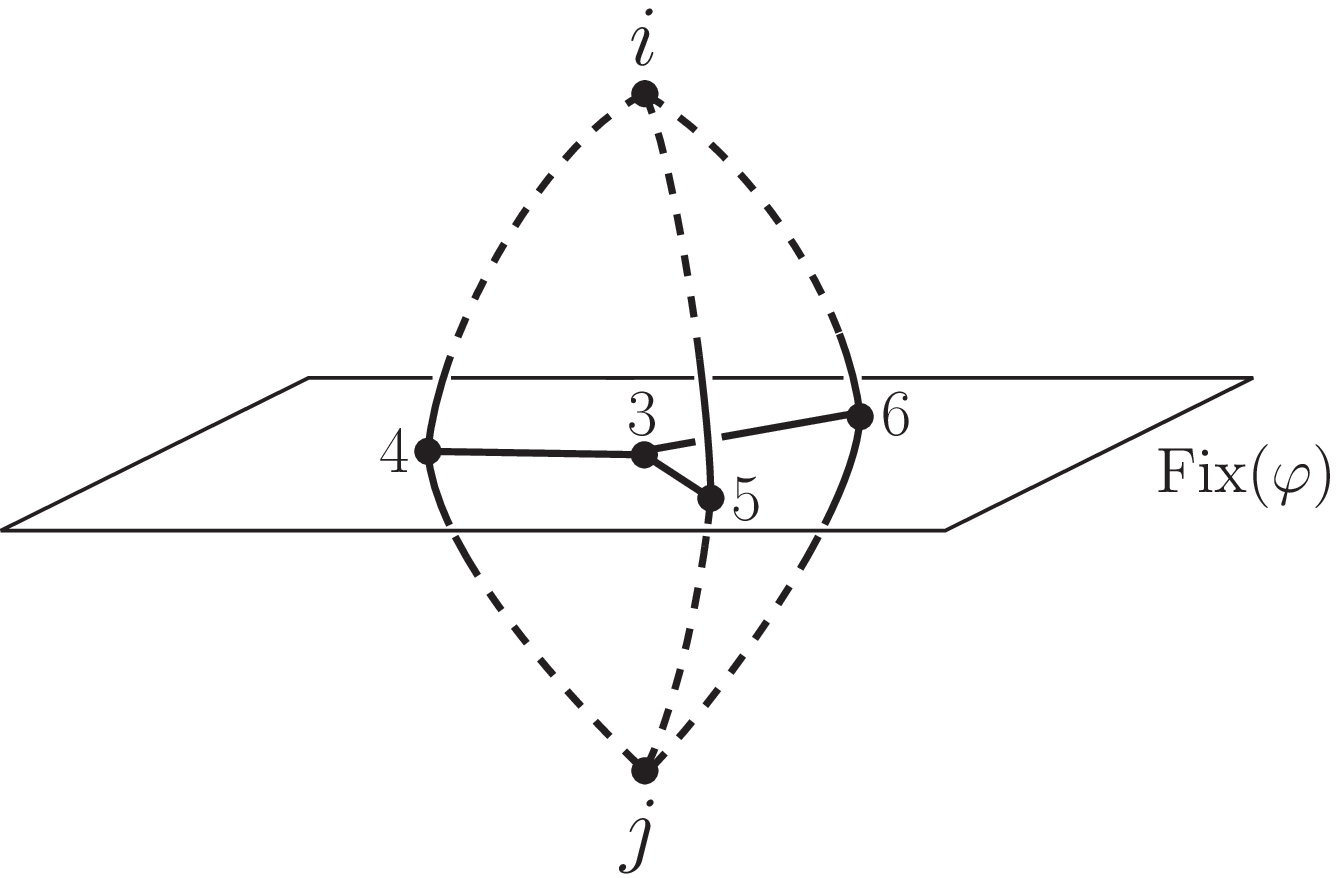}}
      \end{center}
   \caption{}
  \label{4.10}
\end{figure} 

\noindent
(8) (``only if'' part) Note that the cube of $(1\ 2)(4\ 5\ 6)$ equals $(1\ 2)$. Therefore $(1\ 2)(4\ 5\ 6)$-symmetric spatial embeddings of $K_{3,3}$ are $(1\ 2)$-symmetric. Then we have the result by (7) (``only if'' part). 
\end{proof}

\begin{proof}[Proof of Theorem \ref{achiral}.] 
Let $G$ be $K_5$ or $K_{3,3}$. For a spatial embedding $f$ of $G$ and a self-homeomorphism $\varphi$ of ${\mathbb S}^3$ we have ${\mathcal L}(\varphi\circ f)={\mathcal L}(f)$ if $\varphi$ is orientation-preserving and ${\mathcal L}(\varphi\circ f)=-{\mathcal L}(f)$ if $\varphi$ is orientation-reversing. By combining this fact with Lemma \ref{auto_simon} and Remark \ref{wu_sign}, we have the results. 
\end{proof}

\begin{Remark}
{\rm 
(1) All symmetries shown by various examples in this paper are realized by periodic self-homeomorphisms of ${\mathbb S}^3$. Hence all symmetric spatial graphs in this paper are rigidly symmetric.

\noindent
(2) There are alternative proofs of the ``only if'' parts of Theorems \ref{K5} (1) and \ref{K33} (1), based on Theorem \ref{soma} and the Smith conjecture \cite{morgan-bass79}, just as the proofs of Theorems \ref{K5} (5) and (6) and \ref{K33} (7) and (8) given above are based on Theorems \ref{soma} and \ref{smith}. However, as we said before, the latter theorems are consequences of deep results of $3$-manifold topology. The proof of the ``only if'' parts of Theorems \ref{K5} (1) and \ref{K33} (1) given above are elementary. To find elementary proofs of the ``only if'' parts of Theorems \ref{K5} (5) and (6) and \ref{K33} (7) and (8) is an open problem.
}
\end{Remark}

{\normalsize
}

\end{document}